\renewcommand{\leq}{\leqslant}
\renewcommand{\geq}{\geqslant}
\newcommand{\ptl}{\partial}
\newcommand{\rr}{{\mathbb{R}}}
\newcommand{\rrn}{\mathbb{R}^{n+1}}
\newcommand{\la}{\lambda}
\newcommand{\sph}{\mathbb{S}}
\newcommand{\sub}{\subset}
\newcommand{\subeq}{\subseteq}
\newcommand{\escpr}[1]{\big<#1\big>}
\newcommand{\Sg}{\Sigma} \newcommand{\sg}{\sigma}
\newcommand{\Om}{\Omega}
\newcommand{\eps}{\varepsilon}
\newcommand{\var}{\varphi}
\newcommand{\ric}{\text{Ric}}
\DeclareMathOperator{\divv}{div}
\newtheorem{theorem}{Theorem}[section]
\newtheorem{proposition}[theorem]{Proposition}
\newtheorem{lemma}[theorem]{Lemma}
\newtheorem{corollary}[theorem]{Corollary}
\theoremstyle{definition}
\newtheorem{remark}[theorem]{Remark}
\newtheorem{remarks}[theorem]{Remarks}
\newtheorem{example}[theorem]{Example}
\newtheorem{examples}[theorem]{Examples}
\theoremstyle{remark}
\numberwithin{equation}{section}
\begin{document}

\title[hypersurfaces with boundary in weighted cylinders]{uniqueness results and enclosure properties for hypersurfaces with boundary in weighted cylinders}

\author[K.~Castro]{Katherine Castro}
\address{Katherine Castro.} 
\email{katycadi@gmail.com,ktcastro@ugr.es}

\author[C.~Rosales]{C\'esar Rosales}
\address{C\'esar Rosales. Departamento de Geometr\'{\i}a y Topolog\'{\i}a and Excellence Research Unit ``Modeling Nature'' (MNat) Universidad de Granada, E-18071, Spain.} 
\email{crosales@ugr.es}

\date{September 27, 2021}

\thanks{The second author was supported by MEC-Feder grant MTM2017-84851-C2-1-P, the research grant PID2020-118180GB-I00 funded by MCIN/AEI/10.13039/501100011033 and Junta de Andaluc\'ia grants A-FQM-441-UGR18, PY20-00164, FQM325.} 

\subjclass[2010]{53C42, 53A10, 53C21} 

\keywords{Weighted manifolds, drifted Laplacian, maximum principle, weighted mean curvature}

\begin{abstract}
For a Riemannian manifold $M$, possibly with boundary, we consider the Riemannian product $M\times\rr^k$ with a smooth positive function that weights the Riemannian measures. In this work we characterize parabolic hypersurfaces with non-empty boundary and contained within certain regions of $M\times\rr^k$ with suitable weights. Our results include half-space and Bernstein-type theorems in weighted cylinders. We also generalize to this setting some classical properties about the confinement of a compact minimal hypersurface to certain regions of Euclidean space according to the position of its boundary. Finally, we show interesting situations where the statements are applied, some of them in relation to the singularities of the mean curvature flow.
\end{abstract}

\maketitle

\thispagestyle{empty}

\section{Introduction}
\label{sec:intro}
\setcounter{equation}{0}

\emph{Weighted manifolds} are Riemannian manifolds where a positive function weights the Hausdorff measures associated to the Riemannian distance. They provide a useful generalization of Riemannian geometry, with connections to many interesting topics including geometric flows and optimal transport, see Morgan~\cite[Ch.~18]{gmt} and Espinar~\cite[Sect.~4]{espinar}. 

Most of the curvature and differential operators in a Riemannian manifold have a weighted counterpart. For instance, in a manifold $M$ with smooth weight $e^\psi$ it is possible to define the \emph{drifted Laplacian} $\Delta_\psi$ and the \emph{Bakry-\'Emery-Ricci tensor} $\ric_\psi$, that extend the classical notions of the Laplacian operator and the Ricci tensor in $M$, see Section~\ref{subsec:wm}. In particular, we can introduce the \emph{parabolicity property} in the weighted setting by following the approach of Impera, Pigola and Setti in \cite{ipr} for Riemannian manifolds with non-empty boundary. More precisely, we say that $M$ is $\psi$-\emph{parabolic} if any function $u$ bounded from above such that $\Delta_\psi u\geq 0$ on $M$ and $\ptl u/\ptl\nu\geq 0$ along $\ptl M$ must be constant (here $\nu$ is the inner normal over $\ptl M$). According to the maximum principle and the Hopf lemma for the drifted Laplacian, see Proposition~\ref{prop:mp}, any compact manifold $M$ is $\psi$-parabolic. Nevertheless, the class of $\psi$-parabolic manifolds is considerably larger than the class of compact manifolds, as it is illustrated in Remark~\ref{re:vvvvvv}.

In this paper we consider a Riemannian product $M\times\rr^k$ with weight $e^\psi$. Our main objective is to characterize weighted parabolic hypersurfaces having non-empty boundary and controlled \emph{weighted mean curvature}.  The \emph{$\psi$-mean curvature} of a two-sided hypersurface is the function $H_\psi$ in \eqref{eq:fmc} defined by Gromov~\cite{gromov-GAFA} and Bayle~\cite{bayle-thesis}, who studied critical points with empty boundary of the weighted area functional. More recently, the authors  considered in \cite{castro-rosales} critical hypersurfaces for the area with free boundary. Our results in the present work classify hypersurfaces with bounded $\psi$-mean curvature confined into certain regions of $M\times\rr^k$, and entire horizontal graphs with constant $\psi$-mean curvature and free boundary in a Riemannian cylinder $M\times\rr$ under certain behaviour of the Bakry-\'Emery-Ricci tensor. As a consequence, we deduce half-space and Bernstein type theorems for hypersurfaces with non-empty boundary in some weighted manifolds. Related results for hypersurfaces with empty boundary have been established by many authors in several previous works, see for instance \cite{wang,liu,doan-bernstein,pigola-rimoldi,cheng-zhou,cmz,impera,espinar-halfspace,cavalcante-santos,espinar,delima-santos,limaetal,vieira-zhou,MR3779020,MR4077400,MR4079989,MR4194217,doan-bernstein2,hpr1,miranda-vieira}. We also generalize to the weighted context some well-known enclosure properties for compact minimal surfaces with boundary in $\rr^3$, like the convex hull property, the hyperboloid theorem and the cone theorem,  see \cite[Ch.~6]{dierkes2}. The motivation for these statements is showing that, for compact $\psi$-minimal hypersurfaces, the position of the boundary can determine the position of the whole hypersurface.

The arguments employed in our proofs rely on a simple and unified approach that applies to several situations. First, we take a function $u$ on $M\times\rr^k$ having a geometric meaning, in the sense that its level hypersurfaces are horizontal slices, vertical cylinders, round spheres,\ldots,etc. Next, we compute the weighted Laplacian $\Delta_\psi u$ and the boundary term $\ptl u/\ptl\nu$ in order to find geometric and analytical conditions ensuring that $\Delta_\psi u\geq 0$ and $\ptl u/\ptl\nu\geq 0$. From here we can deduce the results by using the maximum principle in Proposition~\ref{prop:mp} or the Liouville type property satisfied by parabolic hypersurfaces. With this scheme of work in mind we can now give a more detailed description of the paper.

In Section~\ref{sec:height} we consider the \emph{height function} $\pi(p,t):=t$ for points $(p,t)$ in a cylinder $M\times\rr$ with weight $e^\psi$. This leads to half-space results for hypersurfaces with free boundary in $\ptl M\times\rr$ or tangent to a horizontal slice $M\times\{s\}$, see Theorem~\ref{th:half-space1} and Proposition~\ref{prop:half-spaceradial}. The case of compact $\psi$-minimal hypersurfaces with boundary in a horizontal slice is also discussed, see Proposition~\ref{prop:cereza}. In Section~\ref{sec:angle} we analyze the \emph{angle function} $\theta:=\escpr{\xi,N}$, where $\xi$ is the parallel vertical vector field in $M\times\rr$ and $N$ is a unit normal to the hypersurface. As a consequence, we derive in Theorem~\ref{th:main2} a Bernstein type result for horizontal multigraphs with constant $\psi$-mean curvature and free boundary in $\ptl M\times\rr$. From here we establish in Corollaries~\ref{cor:2} and ~\ref{cor:bernstein2} some criteria ensuring uniqueness of the horizontal slices as solutions to the Bernstein problem in $M\times\rr$ with free boundary in $\ptl M\times\rr$. Finally, in Section~\ref{sec:distance} we study the \emph{squared distance function} $d(p,t):=|t|^2$ for points $(p,t)$ in a Riemannian product $M\times\rr^k$. In particular, we prove  in Theorem~\ref{th:cylinder} a classification result for hypersurfaces inside $M\times B^k(r)$ and with boundary tangent to the round cylinder $M\times\sph^{k-1}(r)$  (here $B^k(r)$ and $\sph^{k-1}(r)$ denote the round ball and sphere of radius $r>0$ in $\rr^k$ centered at the origin). As we explain in Remark~\ref{re:whole} the technique allows also to characterize hypersurfaces in $\rr^k$ with boundary tangent to a round sphere $\sph^{k-1}(r)$ or with free boundary in a smooth solid cone with vertex at the origin. Enclosure properties associated to the functions $\pi$ and $d$ are obtained in Proposition~\ref{prop:main11}, Corollary~\ref{cor:convexhull} and Proposition~\ref{prop:cylinder}. As we illustrate at the end of the paper, the arguments can be adapted to infer analogous results in other situations.

The exposition contains several examples involving interesting weights. Since our ambient manifold is the Riemannian product $M\times\rr^k$ it is natural to consider product weights $e^{h(p)}\,e^{v(t)}$. Indeed, in Lemma~\ref{lem:fproduct} we see that, in a cylinder $M\times\rr$, these weights are the unique ones for which any horizontal slice has constant weighted mean curvature. In some cases we can improve our results by assuming that the vertical component $e^{v(t)}$ is a radial or homogeneous weight in $\rr^k$. We also discuss the setting where $M\times\rr^k$ is a gradient Ricci soliton with respect to $e^\psi$, i.e., the Bakry-\'Emery-Ricci tensor $\ric_\psi$ is proportional to the metric tensor. Some perturbations of a gradient Ricci soliton are allowed, as that as the product of mixed solitons. 

As we have already mentioned, there is a connection between weighted manifolds and geometric flows. More precisely, some $\psi$-minimal hypersurfaces appear as singularities of the mean curvature flow, see Example~\ref{ex:flow} and the references therein. In relation to this, Yamamoto~\cite{yamamoto} has recently discovered that $\psi$-minimal hypersurfaces in shrinking gradient Ricci solitons model certain type I singularities of the Ricci-mean curvature flow. As consequences of our statements we provide characterization, inexistence results and enclosure properties that apply in particular for self-shrinkers, self-expanders and translating solitons with non-empty boundary in Euclidean space. 

We finish this introduction by pointing out that the method employed is also valid for hypersurfaces with empty boundary; in this case, no assumption on the boundary is needed and the Neumann condition in the definition of parabolic manifold is void, and so trivially satisfied.

\section{Preliminaries}
\label{sec:prelimi}
\setcounter{equation}{0}

In this section we introduce basic definitions and recall some results that will be used throughout the paper. The content is organized into several subsections.

\subsection{Weighted manifolds and drifted Laplacian}
\label{subsec:wm}
By a \emph{weighted manifold} we mean a connected oriented manifold $M^{n+1}$, possibly with smooth boundary $\ptl M$, together with a Riemannian metric $g:=\escpr{\cdot\,,\cdot}$, and a $C^1$ positive function $e^\psi$, that weights the Hausdorff measures in $(M,g)$. In particular, the \emph{weighted volume} of a Borel set $\Om$ and the \emph{weighted area} of a hypersurface $\Sg$ in $M$ are defined by
\[
V_\psi(\Om):=\int_{\Om}e^\psi\,dv, \quad A_\psi(\Sg):=\int_\Sg e^\psi\,da,
\]
where $dv$ and $da$ denote, respectively, the volume and area elements in $(M,g)$.

In a manifold with a $C^2$ weight $\psi$ there are generalized curvature notions involving the curvatures of $(M,g)$ and the derivatives of $\psi$. An extension of the Riemannian Ricci tensor $\ric$ is the \emph{Bakry-\'Emery-Ricci tensor} $\ric_\psi$, see \cite[p.~182]{gmt} and the references therein, which is the $2$-tensor
\begin{equation}
\label{eq:fricci}
\ric_\psi:=\ric-\nabla^2\psi,
\end{equation}
where $\nabla^2$ stands for the Hessian operator in $(M,g)$. The \emph{Bakry-\'Emery-Ricci curvature} at a point $x\in M$ in the direction of a tangent vector $w\in T_xM$ is the number $(\ric_\psi)_x(w,w)$. When we write $\ric_\psi\geq c$ for some $c\in\rr$ we mean that $(\ric_\psi)_x(w,w)\geq c\,|w|^2$ for any $x\in M$ and any $w\in T_xM$. The notation $|\cdot|$ refers to the Riemannian length of tangent vectors in $M$. If equality $\ric_\psi=c\,g$ holds then it is said that $(M,g)$ is a \emph{$c$-gradient Ricci soliton} with respect to $e^\psi$.

Most of the classical differential operators in $(M,g)$ have a weighted counterpart. For the Laplacian operator $\Delta$ in $(M,g)$ this is given by the \emph{drifted Laplacian} or \emph{$\psi$-Laplacian} $\Delta_\psi$, see \cite[Sect.~3.6]{gri-book}. This is the second order linear operator
\begin{equation}
\label{eq:flaplacian}
\Delta_\psi u:=\Delta u+\escpr{\nabla\psi,\nabla u},
\end{equation}
where $u\in C^2(M)$ and $\nabla$ denotes the gradient of functions in $(M,g)$. If we have $\Delta_\psi u\geq 0$ on $M$ then $u$ is a \emph{$\psi$-subharmonic function}.

Due to its local nature, the strong maximum principle for uniformly elliptic operators on Euclidean domains \cite[Thm.~3.5]{gilbarg-trudinger} is also satisfied for the drifted Laplacian $\Delta_\psi$ on weighted manifolds, see  \cite[Cor.~8.15]{gri-book}. There is also a weighted version of the Hopf boundary point lemma, that can be deduced as in the Euclidean case \cite[Lem.~3.4]{gilbarg-trudinger} by using a radial barrier comparison function. For future reference we state both results in the next proposition.

\begin{proposition}
\label{prop:mp}
Let $M$ be a connected Riemannian manifold with weight $e^\psi$ and $u\in C^2(M)$ a $\psi$-subharmonic function in $M\setminus\ptl M$. Then, we have:
\begin{itemize}
\item[(i)] if $u$ achieves its maximum in $M\setminus\ptl M$ then $u$ is constant,
\item[(ii)] if there is $x_0\in\ptl M$ such that $u(x)<u(x_0)$ for any $x\in M\setminus\ptl M$ then $(\ptl u/\ptl\nu)(x_0)<0$, where $\nu$ denotes the inner unit normal along $\ptl M$.
\end{itemize}
\end{proposition}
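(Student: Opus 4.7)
The plan is to reduce both assertions to the corresponding classical Euclidean statements by working in local coordinates, taking advantage of the fact that $\Delta_\psi u=\Delta u+\escpr{\nabla\psi,\nabla u}$ is a second-order linear elliptic operator with continuous coefficients (since $\psi\in C^1$). In a coordinate chart, $\Delta_\psi$ has the form $a^{ij}\ptl_{ij}u+b^{i}\ptl_i u$, where $a^{ij}$ comes from the inverse metric (hence is uniformly elliptic on relatively compact sets) and $b^i$ gathers the Christoffel contribution of $\Delta$ together with the drift $\nabla\psi$.

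For part (i), let $\Sg:=\{x\in M\setminus\ptl M: u(x)=\max_M u\}$, which is non-empty by hypothesis and closed by continuity of $u$. I would show $\Sg$ is open: given $x_1\in\Sg$, apply the strong maximum principle of Gilbarg--Trudinger \cite[Thm.~3.5]{gilbarg-trudinger} to $\Delta_\psi$ on a small coordinate ball $U$ around $x_1$ to conclude that $u$ is constant on $U$, so $U\subseteq\Sg$. Since $M\setminus\ptl M$ is connected, $\Sg=M\setminus\ptl M$, and continuity of $u$ up to $\ptl M$ yields the conclusion on all of $M$.

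For part (ii), the plan is the classical Hopf barrier argument adapted to the drifted Laplacian. Since $\ptl M$ is smooth, choose $y\in M\setminus\ptl M$ and $R>0$ small enough that the geodesic ball $B:=B_R(y)$ is relatively compact inside the injectivity radius, $\overline{B}$ meets $\ptl M$ only at $x_0$, and the distance function $r(x):=d(x,y)$ is smooth on $B\setminus\{y\}$. Define on the annular region $A:=B\setminus\overline{B_{R/2}(y)}$ the barrier $v(x):=e^{-\alpha r(x)^2}-e^{-\alpha R^2}$. A direct computation, using $|\nabla r|=1$ and the definition of $\Delta_\psi$, gives
\[
\Delta_\psi v=e^{-\alpha r^2}\big(4\alpha^2 r^2-2\alpha-2\alpha r\,\Delta_\psi r\big).
\]
Because $r\geq R/2$ on $A$ and $\Delta_\psi r$ is bounded there, choosing $\alpha$ large enough forces $\Delta_\psi v>0$ on $A$. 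By hypothesis $u<u(x_0)$ on the compact set $\overline A\setminus\{x_0\}$, so for sufficiently small $\eps>0$ we have $u+\eps v\leq u(x_0)$ on $\ptl A$; applying part (i) to $u+\eps v$ on $A$ yields $u+\eps v\leq u(x_0)$ throughout $A$, with equality at $x_0$ since $v(x_0)=0$. Taking the normal derivative at $x_0$ (where $\nu$ points into $B$, so $\ptl r/\ptl\nu\,(x_0)=-1$) gives $\ptl v/\ptl\nu\,(x_0)=2\alpha R\,e^{-\alpha R^2}>0$, and therefore $(\ptl u/\ptl\nu)(x_0)\leq-\eps\,2\alpha R\,e^{-\alpha R^2}<0$.

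The main obstacle will be the technical bookkeeping in step (ii): verifying that $r$ is smooth and $\Delta_\psi r$ is bounded on $A$, which forces us to work inside the injectivity radius of $y$; away from the cut locus this is routine but requires the explicit formula for the Laplacian of the distance function. Once this is under control, the weight contributes only the harmless term $-2\alpha r\escpr{\nabla\psi,\nabla r}$, which is absorbed in the same way as the Riemannian term $-2\alpha r\,\Delta r$ for sufficiently large $\alpha$.
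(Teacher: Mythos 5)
Your proof is correct and takes essentially the route the paper intends: the paper gives no written proof, only the remark that (i) follows by localizing the Euclidean strong maximum principle \cite[Thm.~3.5]{gilbarg-trudinger} and that (ii) follows by the classical radial barrier argument of \cite[Lem.~3.4]{gilbarg-trudinger}, which is exactly what you carry out. The only cosmetic slip is calling $\overline{A}\setminus\{x_0\}$ compact --- you only need the uniform bound $u\leq u(x_0)-\delta$ on the inner sphere $r=R/2$, since $v$ vanishes on the outer one.
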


From the previous proposition, if $M$ is compact and $u\in C^2(M)$ is a $\psi$-subharmonic function with $\ptl u/\ptl\nu\geq 0$ along $\ptl M$, then $u$ is constant. Following the definition of parabolic Riemannian manifold with boundary in \cite{ips}, we say that a weighted manifold is \emph{weighted parabolic} or \emph{$\psi$-parabolic} when the following Liouville-type property holds: if a function $u\in C^2(M)$ is bounded from above, $\psi$-subharmonic in $M\setminus\ptl M$, and satisfies the Neumann condition $\ptl u/\ptl\nu\geq 0$ along $\ptl M$, then $u$ is constant. A compact manifold $M$ is always $\psi$-parabolic. 

\begin{remark}
\label{re:vvvvvv}
By using the coarea formula as in \cite[Thm.~1]{gri5}, see also \cite[Sect.~11.5]{gri-book}, we may infer parabolicity criteria for complete non-compact weighted manifolds relying on growth properties of the weighted measures. For instance, if there is $x_0\in M\setminus\ptl M$ such that $\int^{\infty} t\,V_\psi^{-1}(t)\,dt=\infty$ (resp. $\int^\infty A_\psi(t)^{-1}\,dt=\infty$), where $V_\psi(t)$ (resp. $A_\psi(t)$) is the weighted volume (resp. weighted area) of the open metric ball $B(x_0,t)$ (resp. metric sphere $S(x_0,t)$ intersected with $M\setminus\ptl M$), then $M$ is $\psi$-parabolic. So, a complete non-compact weighted manifold with $V_\psi(M)<\infty$ is $\psi$-parabolic. As an example, any smooth domain $M\subeq\rrn$ with Gaussian weight $e^{-|x|^2/2}$ is weighted parabolic.
\end{remark}

\subsection{Hypersurfaces in weighted manifolds}
\label{subsec:hypersurfaces}
Let $\Sg$ be a smooth hypersurface, possibly with smooth boundary $\ptl\Sg$, in a Riemannian manifold $(M^{n+1},g)$ with a weight $e^\psi$. The \emph{$\psi$-divergence} in $\Sg$ of a $C^1$ vector field $X$ over $\Sg$ is defined by
\[
\divv_{\Sg,\psi}X:=\divv_\Sg X+\escpr{\nabla\psi,X},
\]
where $\divv_\Sg X$ is the divergence of $X$ with respect to the induced Riemannian metric $g_{|\Sg}$. In the previous definition the term $\nabla\psi$ can be replaced with the gradient $\nabla_\Sg\psi$ of $\psi$ in $(\Sg,g_{|\Sg})$ when $X$ is tangent to $\Sg$. The \emph{$\psi$-Laplacian} in $\Sg$ is the second order linear operator
\begin{equation}
\label{eq:deltaf}
\Delta_{\Sg,\psi}u:=\divv_{\Sg,\psi}(\nabla_\Sg u)=\Delta_\Sg u
+\escpr{\nabla_\Sg\psi,\nabla_\Sg u},
\end{equation} 
where $\Delta_\Sg$ stands for the Laplacian in $(\Sg,g_{|\Sg})$. Note that $\Delta_{\Sg,\psi}$ coincides with the drifted Laplacian in \eqref{eq:flaplacian} of the Riemannian manifold $(\Sg,g_{|\Sg})$ with weight $e^\psi$. We say that $\Sg$ is \emph{$\psi$-parabolic} if such a weighted manifold is weighted parabolic.

We will always assume that $\Sg$ is a \emph{two-sided hypersurface}, so that there is a global \emph{Gauss map} on $\Sg$, i.e., a smooth unit normal vector $N$ along $\Sg$ in $(M,g)$. By following \cite[Sect.~9.4.E]{gromov-GAFA}, see also \cite[Sect.~3.4.2]{bayle-thesis}, we introduce the \emph{$\psi$-mean curvature} of $\Sg$ by means of equality
\begin{equation}
\label{eq:fmc}
H_\psi:=-\divv_{\Sg,\psi}N=nH-\escpr{\nabla\psi,N},
\end{equation}  
where $H$ is the mean curvature of $\Sg$ with respect to $N$ in $(M,g)$. We say that $\Sg$ has \emph{constant $\psi$-mean curvature} (resp. $\Sg$ is \emph{$\psi$-minimal}) if $H_\psi$ is constant (resp. $H_\psi=0$) on $\Sg$. In case $\ptl\Sg\neq\emptyset$ we say that $\Sg$ has \emph{free boundary} in $\ptl M$ if $\ptl\Sg=\Sg\cap\ptl M$ and $\Sg$ is orthogonal to $\ptl M$ along $\ptl\Sg$. It is known that $\Sg$ is a hypersurface with constant $\psi$-mean curvature and free boundary in $\ptl M$ if and only if $\Sg$ is a critical point of the weighted area functional for compactly supported variations preserving $\ptl M$ and the weighted volume, see \cite[Cor.~3.3]{castro-rosales}.

\begin{example}
\label{ex:flow}
In $\rrn$ with radial weight $e^{c |x|^2/2}$, $c\in\{-1,1\}$, a two-sided hypersurface $\Sg$ is $\psi$-minimal if and only if $nH(x)=c\,\escpr{x,N(x)}$ for any $x\in\Sg$. It was shown by Colding and Minicozzi~\cite[Lem.~2.2]{colding-minicozzi}, \cite[Sect.~1.1]{cm2} that this identity is also satisfied by the self-similar solutions to the Euclidean mean curvature flow called \emph{self-shrinkers} for $c=-1$ and \emph{self-expanders} for $c=1$. On the other hand, in $\rrn=\rr^{n}\times\rr$ with product weight $e^{\psi(p,t)}:=e^t$, the weighted minimal hypersurfaces are those for which $nH=\escpr{\xi,N}$, where $\xi$ denotes the unit vertical vector field in $\rrn$. These hypersurfaces are called \emph{translating solitons} since their evolution under the Euclidean mean curvature flow is given by translations.
\end{example}

\subsection{Weighted cylinders}
\label{subsec:wc}
For a complete oriented Riemannian manifold $M^n$, possibly with smooth boundary $\ptl M$, the \emph{Riemannian cylinder} of base $M$ is the Riemannian product $M\times\rr$, where we consider the standard metric in $\rr$. We refer the reader to the book \cite[Ch.~7]{oneill} for basic facts about the geometry of $M\times\rr$ that will be used henceforth.

As usual, the tangent space to $M\times\rr$ at a point $x=(p,t)$ is identified with $T_pM\times\rr$. We denote by $\xi$ the \emph{vertical vector field} $\xi(x):=(0,1)$. This is a unit vector field on $M\times\rr$ which is tangent along the boundary $\ptl M\times\rr$. Moreover, $\xi$ is a parallel vector field, i.e., $D_X\xi=0$ for any vector field $X$, where $D$ stands for the Levi-Civita connection in $M\times\rr$. For any $s\in\rr$, we define the \emph{horizontal slice} $M_s:=M\times\{s\}$. As $\xi$ provides a Gauss map on $M_s$, then $M_s$ is a totally geodesic hypersurface with free boundary in $\ptl M\times\rr$. 

For Riemannian cylinders it is natural to seek weights $e^\psi$ such that any horizontal slice has constant $\psi$-mean curvature. In the next lemma we show that these are the product weights, thus extending the particular case $M\subeq\rr^n$ proved in \cite[Thm.~2.1 (i)]{bcm3}.

\begin{lemma}
\label{lem:fproduct}
Let $e^\psi$ be a weight in a Riemannian cylinder $M^n\times\rr$. Then, any horizontal slice $M_s$ has constant $\psi$-mean curvature $c(s)$ if and only if there are functions $h\in C^\infty(M)$ and $v\in C^\infty(\rr)$ such that $\psi(p,t)=h(p)+v(t)$.
\end{lemma}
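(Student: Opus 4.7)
The plan is to reduce the $\psi$-mean curvature of a horizontal slice $M_s$ to a single vertical derivative of $\psi$ and then read off the product structure. First I would observe that each $M_s$ is totally geodesic in $M\times\rr$, with unit normal given by the parallel field $\xi$. By the definition \eqref{eq:fmc}, this gives
\begin{equation*}
H_\psi(p,s) \;=\; -\escpr{\nabla\psi,\xi}(p,s) \;=\; -\frac{\ptl\psi}{\ptl t}(p,s),
\end{equation*}
so the whole problem is transformed into a question about the vertical partial derivative of $\psi$.

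Next I would handle the direct implication. Assuming $H_\psi$ is constant along each $M_s$ with value $c(s)$, the identity above says $\ptl_t\psi(p,s)=-c(s)$ is independent of $p\in M$ for every fixed $s$. Picking a reference height $t_0\in\rr$ and integrating in $t$, I would set
\begin{equation*}
h(p):=\psi(p,t_0), \qquad v(t):=-\int_{t_0}^{t} c(\tau)\,d\tau,
\end{equation*}
so that $\psi(p,t)=h(p)+v(t)$. Smoothness of $h$ is inherited from $\psi$, and smoothness of $v$ follows once one notes that $c(s)=-\ptl_t\psi(p_0,s)$ for any fixed $p_0$, which is as smooth as $\psi$.

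The converse is immediate: if $\psi(p,t)=h(p)+v(t)$, then $\ptl_t\psi(p,s)=v'(s)$ is independent of $p$, so $H_\psi$ equals the constant $-v'(s)$ along each $M_s$, with $c(s)=-v'(s)$.

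There is no real obstacle here beyond keeping the orientations and the identification $T_{(p,t)}(M\times\rr)\cong T_pM\times\rr$ straight, since the horizontal slices are totally geodesic and the parallel field $\xi$ makes the Hessian term in $\nabla\psi$ collapse to a single partial derivative; the remainder of the argument is an elementary integration in one variable.
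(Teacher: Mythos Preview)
Your proof is correct and follows essentially the same approach as the paper: both reduce $H_\psi$ on $M_s$ to $-\escpr{\nabla\psi,\xi}=-\ptl_t\psi$, then integrate in $t$ from a reference height to split $\psi$ as $h(p)+v(t)$, with smoothness of $c(s)$ read off from a fixed $p$-slice. The only cosmetic differences are your choice of a general base point $t_0$ instead of $0$ and the order in which you present the two implications.
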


\begin{proof}
By equation \eqref{eq:fmc} we have $H_\psi=-\escpr{\nabla\psi,\xi}$ on $M_s$ since $M_s$ is totally geodesic and $\xi$ is a Gauss map on $M_s$. If $\psi(p,t)=h(p)+v(t)$ then $M_s$ has constant $\psi$-mean curvature $c(s):=-v'(s)$. Conversely, take a weight $e^\psi$ such that $M_s$ has constant $\psi$-mean curvature $c(s)$ for any $s\in\rr$. Given $p\in M$, let $\psi_p:\rr\to\rr$ be the smooth function $\psi_p(t):=\psi(p,t)$. Note that $\psi_p'(t)=\escpr{\nabla\psi,\xi}(p,t)=-H_\psi(p,t)=-c(t)$. Thus, $c(t)$ is a smooth function on $\rr$, and
\[
\psi(p,t)=\psi_p(t)=\psi_p(0)-\int_0^t c(s)\,ds.
\]
Hence, if we define $h(p):=\psi(p,0)$ and $v(t):=-\int_0^tc(s)\,ds$, then we get $h\in C^\infty(M)$, $v\in C^\infty(\rr)$, and $\psi(p,t)=h(p)+v(t)$, as we claimed.
\end{proof}

If $e^{\psi(p,t)}:=e^{h(p)}\,e^{v(t)}$ is a product weight on $M\times\rr$, then, for any point $x=(p,t)\in M$, and any pair of tangent vectors $(w_1,\la_1),(w_2,\la_2)$, the Bakry-\'Emery-Ricci tensor satisfies
\begin{equation}
\label{eq:riccidesc}
(\ric_\psi)_x\big((w_1,\la_1),(w_2,\la_2)\big)=(\ric_h)_p(w_1,w_2)-v''(t)\,\la_1\,\la_2,
\end{equation}
where $\ric_h$ stands for the Bakry-\'Emery-Ricci tensor in $M$ with respect to the weight $e^h$. As a consequence, if $M$ is a $c$-gradient Ricci soliton with respect to $e^h$, then $M\times\rr$ is a $c$-gradient Ricci soliton for the product weight $e^{h(p)}\,e^{-ct^2/2}$. The converse is true by a splitting result of Petersen and Wylie~\cite[Lem.~2.1]{petersen-wylie} that we can prove very shortly from Lemma~\ref{lem:fproduct}.

\begin{corollary}
\label{cor:solitons}
If the Riemannian cylinder $M^n\times\rr$ is a $c$-gradient Ricci soliton with respect to a weight $e^\psi$, then there exist $h\in C^\infty(M)$ and $v\in C^\infty(\rr)$ such that $\psi(p,t)=h(p)+v(t)$. Moreover, $M$ and $\rr$ are $c$-gradient Ricci solitons with respect to the weights $e^h$ and $e^v$, respectively.
\end{corollary}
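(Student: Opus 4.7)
The plan is to show that each horizontal slice $M_s$ has constant $\psi$-mean curvature, then invoke Lemma~\ref{lem:fproduct} to obtain the splitting $\psi(p,t)=h(p)+v(t)$, and finally read off the soliton character of the two factors from the product formula \eqref{eq:riccidesc}.

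First I would note that by \eqref{eq:fmc} the $\psi$-mean curvature of $M_s$ with respect to the Gauss map $\xi$ is $H_\psi=-\escpr{\nabla\psi,\xi}=-\ptl\psi/\ptl t$, since $M_s$ is totally geodesic. So constancy of $H_\psi$ along $M_s$ is equivalent to $\ptl\psi/\ptl t$ being independent of the horizontal coordinate. To prove this, I would evaluate the Hessian identity $\nabla^2\psi=\ric-c\,g$ (coming from the hypothesis $\ric_\psi=c\,g$) on the pair $(X,\xi)$ with $X$ an arbitrary horizontal vector field. Because $\xi$ is parallel ($D_X\xi=0$), the left hand side equals $X(\xi\psi)=X(\ptl\psi/\ptl t)$, while in the Riemannian product $M\times\rr$ the Ricci tensor is block-diagonal so $\ric(X,\xi)=0$, and clearly $g(X,\xi)=0$. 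Hence $X(\ptl\psi/\ptl t)=0$ for every horizontal $X$, so $H_\psi$ is constant on each slice. Lemma~\ref{lem:fproduct} then supplies $h\in C^\infty(M)$ and $v\in C^\infty(\rr)$ with $\psi(p,t)=h(p)+v(t)$.

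With the splitting in hand, I would insert it into \eqref{eq:riccidesc} and impose $\ric_\psi=c\,g$. Testing on horizontal pairs $(w,0)$ gives $(\ric_h)_p(w,w)=c\,|w|^2$, so $(M,e^h)$ is a $c$-gradient Ricci soliton. Testing on vertical pairs $(0,\la)$ gives $-v''(t)\,\la^2=c\,\la^2$, i.e.\ $v''\equiv-c$; since the Ricci tensor of $\rr$ vanishes and the Hessian in one variable is just the second derivative, this is precisely the identity $\ric_v=c\,g_\rr$, so $(\rr,e^v)$ is a $c$-gradient Ricci soliton as well. The only genuine obstacle lies in the first step, where one has to recognise that $\ric(X,\xi)=0$ in the product $M\times\rr$; this is immediate from the block structure of the curvature of a Riemannian product, and once noted the remainder is routine bookkeeping with Lemma~\ref{lem:fproduct} and the already derived identity \eqref{eq:riccidesc}.
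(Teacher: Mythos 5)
Your proposal is correct and follows essentially the same route as the paper: you differentiate $\escpr{\nabla\psi,\xi}$ in horizontal directions, use that $\xi$ is parallel together with $\nabla^2\psi=\ric-c\,g$ and $\ric(X,\xi)=g(X,\xi)=0$ to conclude each slice has constant $\psi$-mean curvature, then apply Lemma~\ref{lem:fproduct} and read off the soliton property of each factor from \eqref{eq:riccidesc}. No gaps.
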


\begin{proof}
Let us see that the $\psi$-mean curvature of any horizontal slice $M_s$ is constant. By equation \eqref{eq:fmc} we know that $H_\psi=-\escpr{\nabla\psi,\xi}$ on $M_s$. For any vector $w$ tangent to $M_s$ it is clear that
\[
w\,\big(\escpr{\nabla\psi,\xi}\big)=(\nabla^2\psi)(w,\xi)+\escpr{\nabla\psi,D_w\xi}=\ric(w,\xi)-\ric_\psi(w,\xi)=-c\,\escpr{w,\xi}=0,
\]
where we have used that $\xi$ is parallel, equation \eqref{eq:fricci}, and that $\ric(w,\xi)=0$ for any $w$. From Lemma~\ref{lem:fproduct} there are $h\in C^\infty(M)$ and $v\in C^\infty(\rr)$ such that $\psi(p,t)=h(p)+v(t)$. Finally, by equation~\eqref{eq:riccidesc} we deduce that $M$ and $\rr$ are $c$-gradient Ricci solitons with respect to $e^h$ and $e^v$.
\end{proof}

We finish this section with some definitions. By a \emph{horizontal multigraph} in $M\times\rr$ we mean a two-sided connected hypersurface $\Sg$ with a Gauss map $N$ such that the associated \emph{angle function} $\theta:=\escpr{\xi,N}$ satisfies $\theta\leq 0$ on $\Sg$. For instance, if $\Om$ is a smooth domain in $M$ and $\var\in C^\infty(\overline{\Om})$, then the \emph{horizontal graph} given by 
\[
\text{Gr}(\var):=\{(p,t)\in M\times\rr\,;\,p\in\overline{\Om},\,t=\var(p)\}
\] 
is a horizontal multigraph. Along $\text{Gr}(\var)$ we will always consider the downward unit normal 
\[
N:=\frac{(\nabla_M\var,-1)}{\sqrt{1+|\nabla_M\var|^2}},
\]
where $\nabla_M\var$ denotes the Riemannian gradient of $\var$ in $M$. When $\Om=M$ we will say that $\text{Gr}(\var)$ is an \emph{entire horizontal graph}.

\section{Analysis of the height function}
\label{sec:height}

In a Riemannian cylinder $M^n\times\rr$ the \emph{height function} is the vertical projection, i.e., the smooth function $\pi:M\times\rr\to\rr$ defined by $\pi(p,t):=t$. In this section we employ this function to prove uniqueness and enclosure results for hypersurfaces with boundary in some weighted cylinders. The reader is referred to the Introduction for an account of related works when the boundary is empty. 

We begin with a lemma where we gather some basic computations.

\begin{lemma}
\label{lem:height}
Let $M^n\times\rr$ be a Riemannian cylinder with a weight $e^\psi$. Take a two-sided hypersurface $\Sg$ in $M\times\rr$ with Gauss map $N$ and inner conormal $\nu$ along $\ptl\Sg$. Denote by $\theta:=\escpr{\xi,N}$ the associated angle function and by $H_\psi$ the $\psi$-mean curvature. Then, we have:
\begin{itemize}
\item[(i)] $\Delta_{\Sg,\psi}\pi=H_\psi\,\theta+\escpr{\nabla\psi,\xi}$ on $\Sg$,
\item[(ii)] $\ptl\pi/\ptl\nu=\escpr{\xi,\nu}$ along $\ptl\Sg$.
\end{itemize}
\end{lemma}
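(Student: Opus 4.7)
The plan is to treat (ii) as essentially a one-liner and devote the bulk of the work to (i), where the key input is that $\xi$ is a parallel vector field on $M\times\rr$.

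For (ii), I would begin by noting that the ambient gradient of $\pi$ is precisely the vertical field: since $\pi(p,t)=t$ and the product metric splits as the base metric plus the standard metric on $\rr$, we have $\nabla\pi=\xi$. Consequently, along $\ptl\Sg$,
\[
\frac{\ptl\pi}{\ptl\nu}=\nu(\pi)=\escpr{\nabla\pi,\nu}=\escpr{\xi,\nu},
\]
which is (ii).

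For (i), I would first decompose $\xi$ into its tangential and normal parts along $\Sg$, namely $\xi=\nabla_\Sg\pi+\theta N$, so that $\nabla_\Sg\pi=\xi-\theta N$. To compute $\Delta_\Sg\pi$, I would use the standard identity relating the intrinsic Laplacian of an ambient function $f$ restricted to $\Sg$ with the ambient Hessian, namely
\[
\Delta_\Sg f=\sum_{i=1}^n(\nabla^2 f)(e_i,e_i)+nH\,\escpr{\nabla f,N},
\]
for any local orthonormal frame $\{e_i\}$ of $\Sg$, where the mean curvature sign matches the convention implicit in \eqref{eq:fmc}. Applied to $f=\pi$, the crucial observation is that $\nabla^2\pi\equiv 0$: indeed, $\nabla\pi=\xi$ is parallel in $M\times\rr$, so $D_X\xi=0$ for every $X$. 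The trace term therefore vanishes, yielding $\Delta_\Sg\pi=nH\,\theta$.

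It then remains to add the drift term $\escpr{\nabla_\Sg\psi,\nabla_\Sg\pi}$. Since $\nabla_\Sg\psi$ is tangent to $\Sg$, the normal component $-\theta N$ of $\nabla_\Sg\pi$ contributes nothing, so $\escpr{\nabla_\Sg\psi,\nabla_\Sg\pi}=\escpr{\nabla_\Sg\psi,\xi}$. Writing $\nabla_\Sg\psi=\nabla\psi-\escpr{\nabla\psi,N}N$ gives
\[
\escpr{\nabla_\Sg\psi,\xi}=\escpr{\nabla\psi,\xi}-\escpr{\nabla\psi,N}\,\theta.
\]
Combining this with $\Delta_\Sg\pi=nH\theta$ and invoking the definition $H_\psi=nH-\escpr{\nabla\psi,N}$ from \eqref{eq:fmc},
\[
\Delta_{\Sg,\psi}\pi=nH\theta+\escpr{\nabla\psi,\xi}-\escpr{\nabla\psi,N}\theta=H_\psi\,\theta+\escpr{\nabla\psi,\xi},
\]
which is (i). There is no real obstacle here; the only point to be careful about is the sign convention for $H$ and making sure the intrinsic–ambient Hessian formula is applied with the same convention used in \eqref{eq:fmc}, after which the parallelism of $\xi$ makes the computation collapse cleanly.
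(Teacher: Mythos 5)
Your proposal is correct and follows essentially the same route as the paper: both proofs hinge on $\nabla\pi=\xi$ being parallel, the decomposition $\nabla_\Sg\pi=\xi-\theta N$, and the identity $H_\psi=nH-\escpr{\nabla\psi,N}$ to absorb the drift term (the paper computes $\Delta_\Sg\pi=\divv_\Sg(\xi-\theta N)=-\theta\,\divv_\Sg N=nH\theta$ directly, which is just the divergence-form version of your trace-of-Hessian identity). No gaps.
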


\begin{proof}
It is clear that $\nabla\pi=\xi$ on $M\times\rr$. Hence $\nabla_\Sg\pi=\xi-\theta N$, and so
\[
\Delta_\Sg\pi=\divv_\Sg(\nabla_\Sg\pi)=-\theta\,\divv_\Sg N=nH\,\theta,
\]
since $\xi$ is a parallel vector field and $\divv_\Sg N=-nH$. From \eqref{eq:deltaf} and \eqref{eq:fmc} we get
\[
\Delta_{\Sg,\psi}\pi=nH\,\theta+\escpr{\nabla\psi,\xi}-\escpr{\nabla\psi,N}\,\theta=H_\psi\,\theta+\escpr{\nabla\psi,\xi},
\]
which proves (i). On the other hand, note that
\[
\frac{\ptl\pi}{\ptl\nu}=\escpr{\nabla_\Sg\pi,\nu}=\escpr{\nabla\pi,\nu}=\escpr{\xi,\nu},
\]
because $\nu$ is tangent to $\Sg$. This proves (ii). 
\end{proof}

Now we obtain uniqueness results by applying the $\psi$-parabolicity condition of $\Sg$ to the height function $\pi$. For this we will consider situations where $\Delta_{\Sg,\psi}\pi\geq 0$ on $\Sg$ and $\ptl\pi/\ptl\nu\geq 0$ along $\ptl\Sg$. Note that $\pi_{|\Sg}\leq a$ if and only if $\Sg$ is contained in the lower horizontal half-space $M\times(-\infty,a]$. The fact that $\pi_{|\Sg}$ is constant is equivalent to that $\Sg$ is inside some horizontal slice $M_s:=M\times\{s\}$.

Recall that the product weights in $M\times\rr$ are the unique ones for which any slice $M_s$ has constant $\psi$-mean curvature, see Lemma~\ref{lem:fproduct}. In this setting we can prove a half-space theorem for hypersurfaces with boundary tangent to some horizontal slice or with free boundary in $\ptl M\times\rr$. 

\begin{theorem}
\label{th:half-space1}
Let $M^n\times\rr$ be a Riemannian cylinder with a product weight $e^{\psi(p,t)}:=e^{h(p)}\,e^{v(t)}$, where $h\in C^\infty(M)$ and $v\in C^\infty(\rr)$.
Suppose that there are $\la\geq0$ and $a\in\rr$ such that $v'(t)\geq\la$ for any $t\leq a$. Let $\Sg$ be a two-sided, connected and $\psi$-parabolic hypersurface contained in $M\times (-\infty,a]$. Suppose that either $|H_\psi|\leq\la$ or $\Sg$ is a horizontal multigraph with $H_\psi\leq \la$.
\begin{itemize}
\item[(i)] If $\Sg$ is tangent to $M_a$ along $\ptl\Sg$, then $\Sg\subeq M_a$ and the $\psi$-mean curvature of $M_a$ equals $\la$.
\item[(ii)] If $\Sg$ is complete and has free boundary in $\ptl M\times\rr$, then $\Sg=M_s$ for some horizontal slice of $\psi$-mean curvature $\la$.
\end{itemize}
\end{theorem}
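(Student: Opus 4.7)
The plan is to apply the $\psi$-parabolicity of $\Sg$ to the height function $\pi$. This requires verifying that $\pi$ is bounded above on $\Sg$, that $\Delta_{\Sg,\psi}\pi\geq 0$ on $\Sg\setminus\ptl\Sg$, and that $\ptl\pi/\ptl\nu\geq 0$ along $\ptl\Sg$. Boundedness above by $a$ is immediate from $\Sg\subseteq M\times(-\infty,a]$. For the subharmonicity, since $\psi(p,t)=h(p)+v(t)$ gives $\escpr{\nabla\psi,\xi}=v'(\pi)$, Lemma~\ref{lem:height}(i) yields
\[
\Delta_{\Sg,\psi}\pi=H_\psi\,\theta+v'(\pi)\geq H_\psi\,\theta+\la,
\]
so it suffices to show $H_\psi\,\theta\geq -\la$. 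If $|H_\psi|\leq\la$, this follows from $|\theta|\leq 1$. In the horizontal multigraph case with $H_\psi\leq\la$, we use $\theta\leq 0$: when $H_\psi\geq 0$, $|H_\psi\,\theta|\leq H_\psi\leq\la$; when $H_\psi<0$, $H_\psi\,\theta\geq 0$.

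For the Neumann condition, Lemma~\ref{lem:height}(ii) gives $\ptl\pi/\ptl\nu=\escpr{\xi,\nu}$. In case (i), tangency of $\Sg$ to $M_a$ along $\ptl\Sg$ means $T_x\Sg=T_xM_a$ at every $x\in\ptl\Sg$, hence $\nu$ is horizontal and $\escpr{\xi,\nu}=0$. In case (ii), the free boundary condition forces $\Sg$ to meet $\ptl M\times\rr$ orthogonally along $\ptl\Sg$; equivalently, the horizontal unit outward normal of $\ptl M\times\rr$ in $M\times\rr$ lies in $T\Sg$ along $\ptl\Sg$, and being perpendicular to $T(\ptl\Sg)$ it coincides with $\pm\nu$, so again $\escpr{\xi,\nu}=0$. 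In both cases the Neumann inequality holds with equality, so $\psi$-parabolicity forces $\pi\equiv s$ on $\Sg$ for a constant $s$.

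It remains to identify $s$ and the $\psi$-mean curvature of the resulting slice. In case (i), $\pi=a$ on $\ptl\Sg$ forces $s=a$, so $\Sg\subseteq M_a$. In case (ii), $s\leq a$ and $\Sg$ is an $n$-dimensional complete connected submanifold of $M_s$ with $\ptl\Sg\subseteq\ptl M_s$; since $\Sg$ is closed in $M_s$ by completeness and the interior of $M_s$ is connected, a standard open-and-closed argument yields $\Sg=M_s$. Substituting $\pi\equiv s$ into the formula of Lemma~\ref{lem:height}(i) gives $H_\psi\,\theta=-v'(s)$. On a horizontal slice $N=\pm\xi$, so $|H_\psi|=v'(s)$; combining with $v'(s)\geq\la$ and the curvature hypothesis (using in the multigraph case that $\theta=-1$ is forced by $\theta\leq 0$) produces $v'(s)=\la$. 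Thus $M_s$ has $\psi$-mean curvature $\la$ with the appropriate orientation.

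The step I expect to be most delicate is the orientation bookkeeping: both when deriving $H_\psi\,\theta\geq-\la$ under the multigraph hypothesis, and at the end when concluding the correct sign $H_\psi=\la$ on the limiting slice. A secondary point is verifying in case (ii) that $\Sg\subseteq M_s$ together with completeness and the free boundary condition actually implies $\Sg=M_s$, rather than a proper subregion.
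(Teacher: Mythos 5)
Your proposal is correct and follows essentially the same route as the paper: compute $\Delta_{\Sg,\psi}\pi=H_\psi\theta+v'\circ\pi$ via Lemma~\ref{lem:height}, verify $\psi$-subharmonicity from the hypotheses on $H_\psi$ and $v'$, check the vanishing Neumann condition in both boundary situations, and invoke $\psi$-parabolicity. The only (immaterial) variation is how you extract $v'(s)=\la$ at the end — via $N=\pm\xi$ on the slice rather than directly from equality in the chain $0=\Delta_{\Sg,\psi}\pi\geq H_\psi\theta+\la\geq 0$ — and both work.
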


\begin{proof}
The height function satisfies $\pi_{|\Sg}\leq a$. We have $\escpr{\nabla\psi,\xi}=v'\circ\pi\geq\la$ on $\Sg$ because $\pi_{|\Sg}\leq a$ and $v'(t)\geq\la$ for any $t\leq a$. By Lemma~\ref{lem:height} we deduce that $\pi$ is $\psi$-subharmonic on $\Sg$. This is clear when $\Sg$ is a multigraph with $H_\psi\leq\la$. Indeed
\[
\Delta_{\Sg,\psi}\pi=H_\psi\,\theta+v'\circ\pi\geq H_\psi\,\theta+\la\geq 0
\]
since $\theta\in [-1,0]$ on $\Sg$. In the case $|H_\psi|\leq\la$ on $\Sg$ we get
\[
\Delta_{\Sg,\psi}\pi=H_\psi\,\theta+v'\circ\pi\geq-|H_\psi|\,|\theta|+\la\geq 0
\]
because $|\theta|\leq 1$. On the other hand, if $\Sg$ is tangent to $M_a$ along $\ptl\Sg$ or has free boundary in $\ptl M\times\rr$, then $\ptl\pi/\ptl\nu=\escpr{\xi,\nu}=0$ along $\ptl\Sg$. In the first case this is obvious since $\xi$ is normal to $M_a$. In the second one this comes from the fact that $\nu$ equals the inner normal to $\ptl M\times\rr$ along $\ptl\Sg$ whereas $\xi$ is tangent to $\ptl M\times\rr$. Anyway, by the $\psi$-parabolicity of $\Sg$ it follows that $\pi_{|\Sg}$ is constant, i.e., $\Sg\subeq M_s$ for some $s\in\rr$. Moreover, equality $\Delta_{\Sg,\psi}\pi=0$ yields $v'(s)=\la$, which is equivalent by \eqref{eq:fmc} to that the $\psi$-mean curvature of $M_s$ with respect to $-\xi$ equals $\la$. Finally, if $\Sg$ is complete with $\ptl\Sg=\Sg\cap(\ptl M\times\rr)$, then $\Sg=M_s$. 
\end{proof}

\begin{remarks}
\label{re:opposite}
(i). The hypothesis $v'(t)\geq\la$ for any $t\leq a$ is equivalent to that $H_\psi(t)\geq\la$ for any $t\leq a$, where $H_\psi(t)$ denotes the $\psi$-mean curvature of the horizontal slice $M_t$ with respect to $-\xi$.

(ii). By replacing $\pi$ with $-\pi$ in the previous proof we see that the thesis is also true whenever $|H_\psi|\leq\la$ or $\Sg$ is a multigraph with $H_\psi\geq\la$ contained in an upper horizontal half-space $M\times[a,+\infty)$ such that $v'(t)\leq-\la$ for any $t\geq a$.

(iii). When $\la=0$ the hypotheses mean that $\Sg$ is either $\psi$-minimal or a horizontal multigraph with $H_\psi\leq 0$. The conclusion entails that the horizontal slice containing $\Sg$ is $\psi$-minimal.

(iv). The thesis fails if we do not assume some boundary condition. This is illustrated by a closed half-sphere of radius $\sqrt{n}$ about $0$ in $\rrn$ with Gaussian weight $e^{-|x|^2/2}$. 
\end{remarks}

\begin{examples}
\label{ex:morata}
When $v(t):=-c\,t^2/2$ with $c>0$ the theorem is valid for hypersurfaces with $|H_\psi|\leq\la$ contained in $M\times(-\infty,-\la/c]$ or in $M\times[\la/c,+\infty)$. This includes $\psi$-minimal hypersurfaces within the regions $t\leq0$ or $t\geq 0$. When $v(t):=\la\,t$ the result holds for hypersurfaces with $|H_\psi|\leq|\la|$ inside $M\times(-\infty,a]$ if $\la>0$, or inside $M\times[a,+\infty)$ if $\la<0$. In particular, by Corollary~\ref{cor:solitons}, the statement applies to $c$-gradient Ricci solitons of product type $M\times\rr$ with $c\geq 0$. A special case occurs when $M$ is an Einstein manifold of Ricci curvature $c>0$ and $e^{\psi(p,t)}:=e^{-ct^2/2}$. Observe that the theorem is satisfied for weighted parabolic self-shrinkers and translating solitons with boundary, as defined in Example~\ref{ex:flow}. Indeed, it implies non-existence of complete, parabolic, translating solitons within a lower horizontal half-space and with free boundary in a vertical cylinder. In relation to this, we recall that the half-space theorem for self-shrinkers and translating solitons with empty boundary was studied by Pigola and Rimoldi~\cite[Thm.~3]{pigola-rimoldi}, Cavalcante and Espinar~\cite[Thm.~1.1, Thm.~1.4]{espinar-halfspace}, and Kim and Pyo~\cite{kim-pyo}. 
\end{examples}

For compact $\psi$-minimal hypersurfaces we can reason as in the proof of Theorem~\ref{th:half-space1} to infer the following consequence.

\begin{corollary}
\label{cor:sentinel}
Let $M^n\times\rr$ be a Riemannian cylinder with a product weight $e^{\psi(p,t)}:=e^{h(p)}\,e^{v(t)}$, where $h\in C^\infty(M)$ and $v\in C^\infty(\rr)$. Suppose that $\Sg\subset M\times\rr$ is a two-sided, compact, connected, $\psi$-minimal hypersurface with free boundary in $\ptl M\times\rr$. If $v'\circ\pi$ does not change sign over $\Sg$, then $\Sg=M_s$ for some $s\in\rr$ with $v'(s)=0$. In particular, $M$ is compact.
\end{corollary}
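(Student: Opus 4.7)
The plan is to follow the strategy of Theorem~\ref{th:half-space1}, applying the $\psi$-parabolicity of $\Sg$ (automatic since $\Sg$ is compact) to the height function $\pi$ or its negative, with the $\psi$-minimality hypothesis used to kill the $H_\psi\,\theta$ term in Lemma~\ref{lem:height}(i).

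Concretely, by Lemma~\ref{lem:height}(i) and the product form $\psi(p,t)=h(p)+v(t)$ we have $\escpr{\nabla\psi,\xi}=v'\circ\pi$ on $M\times\rr$; combined with $H_\psi\equiv 0$ this gives
\[
\Delta_{\Sg,\psi}\pi=v'\circ\pi\quad\text{on }\Sg.
\]
For the boundary, the free boundary condition makes $\nu$ the inner unit normal to $\ptl M\times\rr$ along $\ptl\Sg$, and since $\xi$ is tangent to $\ptl M\times\rr$, Lemma~\ref{lem:height}(ii) yields $\ptl\pi/\ptl\nu=\escpr{\xi,\nu}=0$ on $\ptl\Sg$. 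The sign hypothesis on $v'\circ\pi$ now lets us choose $\eps\in\{-1,+1\}$ with $\eps\,(v'\circ\pi)\geq 0$ on $\Sg$; then $u:=\eps\,\pi$ is $\psi$-subharmonic on $\Sg$, bounded by compactness, and satisfies $\ptl u/\ptl\nu=0$ on $\ptl\Sg$. The $\psi$-parabolicity of the compact manifold $\Sg$ forces $u$, and hence $\pi$, to be constant equal to some $s\in\rr$. Thus $\Sg\subeq M_s$, and the vanishing $\Delta_{\Sg,\psi}\pi=0$ pins down $s$ by $v'(s)=0$.

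To finish I would upgrade $\Sg\subeq M_s$ to $\Sg=M_s$. As $\Sg$ and $M_s$ are both $n$-dimensional submanifolds with boundary of $M\times\rr$, the tangent spaces satisfy $T_p\Sg=T_pM_s$ at every point, so $\Sg$ is locally a graph over $M_s$ which, by $\Sg\subeq M_s$, is forced to be flat; hence $\Sg$ is open in $M_s$. Compactness of $\Sg$ makes it closed in $M_s$, and connectedness of $M_s$ (inherited from that of $M$) yields $\Sg=M_s$, whereupon compactness of $\Sg$ gives compactness of $M$. Everything here is routine given Lemma~\ref{lem:height} and the Liouville property built into the definition of $\psi$-parabolicity; the only step that needs a word of care is the open-and-closed topological argument at the end.
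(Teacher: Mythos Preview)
Your proof is correct and follows exactly the approach the paper indicates (``we can reason as in the proof of Theorem~\ref{th:half-space1}''): use Lemma~\ref{lem:height} to get $\Delta_{\Sg,\psi}\pi=v'\circ\pi$ from $H_\psi=0$, use the free boundary condition to get $\ptl\pi/\ptl\nu=0$, apply $\psi$-parabolicity of the compact $\Sg$ to $\pm\pi$ according to the sign of $v'\circ\pi$, and conclude $\Sg\subeq M_s$ with $v'(s)=0$. Your final open-and-closed upgrade to $\Sg=M_s$ is the standard completeness argument the paper invokes at the end of the proof of Theorem~\ref{th:half-space1}(ii); your version is a bit more explicit but amounts to the same thing.
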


\begin{example}
Let $M\sub\rr^n$ be a smooth region and $\Sg\sub M\times\rr$ a two-sided, compact and connected hypersurface with free boundary in $\ptl M\times\rr$. From the corollary, it follows that:
\begin{itemize}
\item[(i)] if $\Sg$ is a self-shrinker / self-expander contained in $M\times(-\infty,0]$ or $M\times[0,+\infty)$, then $M$ is compact and $\Sg=M\times\{0\}$, 
\item[(ii)] $\Sg$ cannot be a translating soliton.
\end{itemize}
\end{example}

A situation extending the unweighted setting where Theorem~\ref{th:half-space1} and Corollary~\ref{cor:sentinel} are applied is a Riemannian cylinder $M\times\rr$ with horizontal weight $e^{h(p)}$. Note that any horizontal slice $M_s$ is a minimal hypersurface for such a weight. We immediately deduce the following consequence containing half-space and Bernstein type results in this case.

\begin{corollary}
\label{cor:half-space2}
Consider a Riemannian cylinder $M^n\times\rr$ with weight $e^{\psi(p,t)}:=e^{h(p)}$, for some $h\in C^\infty(M)$. Let $\Sg$ be a two-sided and complete hypersurface with free boundary in $\ptl M\times\rr$. Suppose that one of the following conditions hold:
\begin{itemize}
\item[(i)] $\Sg$ is $\psi$-parabolic, $\psi$-minimal and contained in a lower or upper horizontal half-space,
\item[(ii)] $\Sg$ is $\psi$-minimal and compact,
\item[(iii)] $\Sg$ is a compact horizontal multigraph such that $H_\psi$ does not change sign,
\item[(iv)] $M$ is compact and $\Sg$ is an entire horizontal graph with $H_\psi$ constant. 
\end{itemize}
Then $\Sg=M_s$ for some $s\in\rr$. 
\end{corollary}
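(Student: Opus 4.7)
The plan is to exploit the fact that for the weight $e^{\psi(p,t)}=e^{h(p)}$ the vertical factor is $v\equiv 0$, so $v'\equiv 0$ everywhere. Consequently every horizontal slice $M_s$ has $\psi$-mean curvature $0$ by Lemma~\ref{lem:fproduct}, and the hypotheses on $v'$ that appear in Theorem~\ref{th:half-space1}, Remark~\ref{re:opposite}(ii) and Corollary~\ref{cor:sentinel} are automatically satisfied with $\la=0$. The whole statement therefore reduces to matching each of the four cases against one of those earlier results; no new analytic ingredient is needed.

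For case (i) I would apply Theorem~\ref{th:half-space1}(ii) directly with $\la=0$: the $\psi$-minimality gives $|H_\psi|\leq 0=\la$, the lower or upper half-space containment is given, and $\Sg$ is $\psi$-parabolic, complete and has free boundary in $\ptl M\times\rr$. The conclusion yields exactly $\Sg=M_s$, with $M_s$ automatically $\psi$-minimal since $v'(s)=0$.

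For cases (ii) and (iii) I would first recall that compact manifolds are $\psi$-parabolic and that any compact hypersurface lies in some slab $M\times[-R,R]$. Case (ii) then follows from Corollary~\ref{cor:sentinel} because $v'\circ\pi\equiv 0$ does not change sign. In case (iii), since $H_\psi$ does not change sign on $\Sg$, either $H_\psi\leq 0$ everywhere, in which case the multigraph form of Theorem~\ref{th:half-space1}(ii) applies with $\la=0$, or $H_\psi\geq 0$ everywhere, in which case I would invoke the upper half-space analog described in Remark~\ref{re:opposite}(ii). Both routes produce $\Sg=M_s$.

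Finally, case (iv) should reduce to case (iii): compactness of $M$ forces the entire graph $\text{Gr}(\var)$ to be compact, its standard downward Gauss map satisfies $\theta=-(1+|\nabla_M\var|^2)^{-1/2}<0$, so $\text{Gr}(\var)$ is a horizontal multigraph, and constancy of $H_\psi$ trivially implies that $H_\psi$ does not change sign. The only small point requiring care anywhere is verifying that Theorem~\ref{th:half-space1}(ii) really applies to a compact $\Sg$ with free boundary — which it does, since compactness subsumes both completeness and the containment in some horizontal half-space — so the main obstacle is really just the bookkeeping of matching each case's hypotheses to the correct earlier statement rather than any genuinely new difficulty.
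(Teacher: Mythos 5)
Your proposal is correct and follows exactly the route the paper intends: with $v\equiv 0$ one takes $\la=0$, so cases (i) and (iii)--(iv) reduce to Theorem~\ref{th:half-space1}(ii) together with Remark~\ref{re:opposite}(ii) for the upper half-space/nonnegative $H_\psi$ alternatives, and case (ii) follows from Corollary~\ref{cor:sentinel} since $v'\circ\pi\equiv 0$ does not change sign. The paper gives no separate proof (it states the corollary as an immediate consequence of those results), and your case-by-case matching, including the reduction of (iv) to (iii) via compactness of the entire graph, is precisely that deduction.
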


\begin{example}
The corollary applies in $M\times\rr$ for $M$ compact with constant weight. It is also valid in $\rr^n\times\rr$ with weight $e^{\psi(p,t)}:=e^{-c|p|^2/2}$ and $c\in\rr$. For the mixed Gaussian-Euclidean weight obtained when $c>0$ isoperimetric and area-minimizing hypersurfaces have been studied, respectively, by Fusco, Maggi and Pratelli~\cite{fmp}, Doan~\cite{calibrations}, and Doan and Nam~\cite{doan-bernstein}.
\end{example}

The technique employed in the proof of Theorem~\ref{th:half-space1} allows also to derive the following statement for compact $\psi$-minimal hypersurfaces with boundary in a horizontal slice.

\begin{proposition}
\label{prop:cereza}
Let $M^n\times\rr$ be a Riemannian cylinder with a product weight $e^{\psi(p,t)}:=e^{h(p)}\,e^{v(t)}$, where $h\in C^\infty(M)$ and $v\in C^\infty(\rr)$. Consider a two-sided, compact and connected $\psi$-minimal hypersurface $\Sg\sub M\times\rr$. Suppose that there is $a\in\rr$ such that one of these conditions hold:
\begin{itemize}
\item[(i)] $\Sg\subset M\times(-\infty,a]$, $v'\circ\pi\leq 0$ in $\Sg$ and $\ptl\Sg\subset M_a$,
\item[(ii)] $\Sg\subset M\times[a,+\infty)$, $v'\circ\pi\geq 0$ in $\Sg$ and $\ptl\Sg\subset M_a$.
\end{itemize}
Then $\Sg\subseteq M_a$ and $v'(a)=0$.
\end{proposition}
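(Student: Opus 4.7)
The plan is to combine the formula for the weighted Laplacian of the height function from Lemma~\ref{lem:height} with the strong maximum principle of Proposition~\ref{prop:mp}(i). Since $\Sg$ is $\psi$-minimal ($H_\psi=0$) and the product weight gives $\escpr{\nabla\psi,\xi}=v'(t)$, Lemma~\ref{lem:height}(i) reduces to the identity
\[
\Delta_{\Sg,\psi}\pi=v'\circ\pi \quad \text{on } \Sg.
\]

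For case (i), I would work with the auxiliary function $u:=a-\pi$. The hypothesis $\Sg\subset M\times(-\infty,a]$ gives $u\geq 0$ on $\Sg$, while $\ptl\Sg\subset M_a$ yields $u=0$ along $\ptl\Sg$. The sign assumption $v'\circ\pi\leq 0$ together with the identity above gives $\Delta_{\Sg,\psi}u=-v'\circ\pi\geq 0$, so $u$ is $\psi$-subharmonic on $\Sg\setminus\ptl\Sg$. Because $\Sg$ is compact, $u$ attains a maximum on $\Sg$. If this maximum is realized at some interior point of $\Sg$, Proposition~\ref{prop:mp}(i) forces $u$ to be constant, and the boundary condition $u_{|\ptl\Sg}=0$ identifies this constant as zero. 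If the maximum is attained only on $\ptl\Sg$, then $\max_\Sg u=0$, so $u\leq 0$ on $\Sg$, which combined with $u\geq 0$ again gives $u\equiv 0$. In either case $\pi\equiv a$ on $\Sg$, so $\Sg\subseteq M_a$; moreover, $\pi$ being constant on $\Sg$ gives $\Delta_{\Sg,\psi}\pi\equiv 0$, and therefore $v'(a)=0$.

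Case (ii) is symmetric: I would instead put $u:=\pi-a$, so that $u\geq 0$ on $\Sg$, $u=0$ along $\ptl\Sg$, and the sign condition $v'\circ\pi\geq 0$ makes $\Delta_{\Sg,\psi}u=v'\circ\pi\geq 0$. The identical maximum principle argument closes the proof.

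No serious obstacle is anticipated; the statement follows directly from the computational lemma and the interior maximum principle. The only point worth being careful about is that one should not try to invoke $\psi$-parabolicity of the compact hypersurface $\Sg$, since the Neumann condition $\ptl u/\ptl\nu\geq 0$ on $\ptl\Sg$ is not available in this setup. Instead, the argument exploits directly the prescribed boundary value $u=0$ on $\ptl\Sg$ together with Proposition~\ref{prop:mp}(i).
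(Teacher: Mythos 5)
Your proof is correct and is exactly the paper's intended route: the paper gives no explicit proof of this proposition, saying only that it follows from the technique of Theorem~\ref{th:half-space1}, and your reduction to $\Delta_{\Sg,\psi}\pi=v'\circ\pi$ followed by the maximum principle of Proposition~\ref{prop:mp}(i) is precisely that technique (it is also how Proposition~\ref{prop:main11} is argued). One small correction to your closing remark: the Neumann condition is in fact available here, because $u:=a-\pi$ (resp.\ $u:=\pi-a$) is nonnegative on $\Sg$ and vanishes along $\ptl\Sg$, so its derivative in the inner conormal direction automatically satisfies $\ptl u/\ptl\nu\geq 0$; hence the $\psi$-parabolicity of the compact hypersurface $\Sg$ could equally well have been invoked, exactly as in Theorem~\ref{th:half-space1}.
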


\begin{example}
As an application of the previous proposition, we conclude that:
\begin{itemize}
\item[(i)] in $M\times\rr$ with weight $e^{\psi(p,t)}:=e^{h(p)}$, any compact, connected $\psi$-minimal hypersurface $\Sg$ contained in a horizontal half-space $F$ and with $\ptl\Sg\subset\ptl F$ satisfies $\Sg\subset\ptl F$,
\item[(ii)] there is no compact self-shrinker $\Sg$ of $\rrn$ contained in a horizontal slab $\rr^n\times[0,a]$ with $a>0$ or $\rr^n\times [a,0]$ with $a<0$, and such that $\ptl\Sg\sub\{t=a\}$,
\item[(iii)] if $\Sg$ is a compact self-expander of $\rrn$ within a horizontal half-space $\rr^n\times(-\infty,a]$ with $a\leq 0$ or $\rr^n\times [a,+\infty)$ with $a\geq 0$, and such that $\ptl\Sg\sub\{t=a\}$, then $\Sg\subset\{t=0\}$.
\item[(iv)] there is no compact translating soliton $\Sg$ of $\rrn$ contained in a horizontal half-space $\rr^n\times[a,+\infty)$ and such that $\ptl\Sg\sub\{t=a\}$.
\end{itemize}
\end{example}

In Examples~\ref{ex:morata} we have seen that the half-space result in Theorem~\ref{th:half-space1} holds for hypersurfaces with boundary which are $\psi$-minimal for the Gaussian weight. We now show that the arguments can be extended to more general Euclidean radial weights, including the log-concave ones. Though the statement is also satisfied when $H_\psi\,\theta\geq 0$, we only consider the minimal case $H_\psi=0$.

\begin{proposition}
\label{prop:half-spaceradial}
Consider a radial weight $e^{\psi(x)}:=e^{\delta(|x|)}$ in $\rrn$ such that $\delta$ is a $C^1$ non-increasing function with $\delta'(0)=0$. Let $\Sg$ be a two-sided, connected, $\psi$-parabolic and $\psi$-minimal hypersurface contained in a closed half-space $F$ with $0\in\ptl F$. If $\escpr{n,\nu}\geq 0$ along $\ptl\Sg$, where $n$ is the unit normal to $\ptl F$ pointing outside $F$, then $\Sg$ is contained in a hyperplane parallel to $\ptl F$. Moreover, if $\psi$ is non-constant on $\Sg$, then $\Sg\subeq\ptl F$. 
\end{proposition}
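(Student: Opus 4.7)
The plan is to apply the $\psi$-parabolicity of $\Sg$ to a geometric function on $\Sg$ that is bounded from above, $\psi$-subharmonic, and has non-negative Neumann derivative along $\ptl\Sg$. Following the scheme used for the height function in Lemma~\ref{lem:height}, I would take the linear function $u:\rrn\to\rr$ defined by $u(x):=\escpr{x,n}$. Since $0\in\ptl F$ and $n$ is the outward unit normal to $\ptl F$, the half-space is $F=\{x\in\rrn\,;\,\escpr{x,n}\leq 0\}$; therefore $u\leq 0$ on $\Sg\sub F$, so $u$ is bounded from above. The Neumann condition is immediate: $\ptl u/\ptl\nu=\escpr{\nabla u,\nu}=\escpr{n,\nu}\geq 0$ along $\ptl\Sg$ by hypothesis.

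Next I would compute $\Delta_{\Sg,\psi}u$ by imitating the argument of Lemma~\ref{lem:height}. Since $u$ is linear with $\nabla u=n$, the Euclidean identity $\Delta_\Sg x=nHN$ gives $\Delta_\Sg u=nH\,\escpr{N,n}$; adding the drift term $\escpr{\nabla_\Sg\psi,\nabla_\Sg u}$ and using \eqref{eq:fmc} yields
\[
\Delta_{\Sg,\psi}u=H_\psi\,\escpr{N,n}+\escpr{\nabla\psi,n}.
\]
Because $\Sg$ is $\psi$-minimal, the first summand vanishes. Moreover, $\nabla\psi(x)=\delta'(|x|)\,x/|x|$ for $x\neq 0$, and $\nabla\psi(0)=0$ thanks to the $C^1$ regularity enforced by $\delta'(0)=0$. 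Consequently $\Delta_{\Sg,\psi}u=\delta'(|x|)\,\escpr{x,n}/|x|$ on $\Sg\setminus\{0\}$, and the sign hypotheses $\delta'\leq 0$ together with $\escpr{x,n}\leq 0$ make this product non-negative. Hence $\Delta_{\Sg,\psi}u\geq 0$ on $\Sg$, and $\psi$-parabolicity forces $u$ to be constant, say $\escpr{x,n}\equiv -c$ with $c\geq 0$. Thus $\Sg$ lies in the hyperplane $\{\escpr{\cdot\,,n}=-c\}$, which is parallel to $\ptl F$.

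For the second assertion, constancy of $u$ means $\Delta_{\Sg,\psi}u\equiv 0$, i.e.\ $\delta'(|x|)\,\escpr{x,n}/|x|=0$ at every $x\in\Sg\setminus\{0\}$. If $c>0$ then $0\notin\Sg$ and one must have $\delta'(|x|)=0$ for every $x\in\Sg$. Since $\Sg$ is connected, the image $\{|x|\,;\,x\in\Sg\}$ is a connected subset of $[c,+\infty)$, and $\delta$ is constant on it because $\delta'$ vanishes there. Consequently $\psi=\delta\circ|\cdot|$ is constant on $\Sg$, contradicting the hypothesis. Therefore $c=0$ and $\Sg\subeq\ptl F$.

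The computation of $\Delta_{\Sg,\psi}u$ is direct and parallel to Lemma~\ref{lem:height}, so it is not the hard part; the $C^1$ regularity of $\psi$ at the origin, ensured by $\delta'(0)=0$, is what allows the calculation to extend through a possible point $x=0\in\Sg$. The main obstacle I foresee is the final rigidity step: one has to combine the connectedness of $\Sg$ with the monotonicity of $\delta$ to conclude that $\psi$ is actually constant on $\Sg$ whenever $\Sg$ lies in a parallel hyperplane disjoint from the origin, and only then can one rule out the case $c>0$ via the non-constancy hypothesis on $\psi$.
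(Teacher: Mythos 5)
Your proof is correct and follows essentially the same route as the paper: the linear function $\escpr{x,n}$, the computation $\Delta_{\Sg,\psi}\escpr{x,n}=\delta'(|x|)\escpr{x,n}/|x|\geq 0$ extended by continuity through the origin, parabolicity to force constancy, and then the vanishing of the drifted Laplacian to handle the rigidity. Your final step, using connectedness of $\Sg$ to pass from $\delta'(|x|)=0$ on $\Sg$ to the constancy of $\psi$ on $\Sg$, is a correct elaboration of a detail the paper states only briefly.
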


\begin{proof}
We denote $P:=\ptl F$ and consider the height function $\pi_P(x):=\escpr{x,n}$ with $x\in\rrn$. Computations as in the proof of Lemma \ref{lem:height} lead to
\[
\Delta_{\Sg,\psi}\,\pi_P=H_\psi\,\theta_P+\escpr{\nabla\psi,n},
\]
where $\theta_P:=\escpr{n,N}$. Since $\Sg$ is $\psi$-minimal, $\delta'\leq 0$ and $\Sg\sub F$, we get
\[
\Delta_{\Sg,\psi}\,\pi_P=\frac{\delta'(|x|)}{|x|}\,\escpr{x,n}\geq 0, \quad\text{on }\Sg\setminus\{0\}.
\]
By continuity, it follows that $\Delta_{\Sg,\psi}\,\pi_P\geq 0$ on $\Sg$. On the other hand, we have
\[
\frac{\ptl\pi_P}{\ptl\nu}=\escpr{n,\nu}\geq 0
\]
along $\ptl\Sg$. As $\pi_{|\Sg}\leq 0$ and $\Sg$ is $\psi$-parabolic, then $\pi_P$ is constant on $\Sg$, i.e., $\Sg$ is contained in some hyperplane $P(c):=\{x\in\rrn\,;\,\escpr{x,n}=c\}$. Thus, the equality $\Delta_{\Sg,\psi}\,\pi_P=0$ becomes
\[
c\,\frac{\delta'(|x|)}{|x|}=0, \quad\text{on }\Sg\setminus\{0\},
\] 
and we conclude that $c=0$ or $\psi$ is constant on $\Sg$. This proves the claim.
\end{proof}

To finish this section we establish an enclosure property showing that a compact hypersurface must be included in any horizontal half-space that contains its boundary. For that, we will apply the maximum principle to a particular situation where $\Delta_{\Sg,\psi}\pi\geq 0$ on $\Sg$.

\begin{proposition}
\label{prop:main11}
Let $M^n\times\rr$ be a Riemannian cylinder with a product weight $e^{\psi(p,t)}:=e^{h(p)}\,e^{v(t)}$ such that $v'\geq\la\geq 0$ on $\rr$. Consider a two-sided, compact and connected hypersurface $\Sg\sub M\times\rr$ such that $|H_\psi|\leq\la$ or $\Sg$ is a horizontal multigraph with $H_\psi\leq\la$. If $\ptl\Sg\sub M\times(-\infty,a]$ for some $a\in\rr$, then $\Sg\sub M\times(-\infty,a]$. Moreover, if $\Sg$ intersects $M_a$ away from $\ptl\Sg$, or $\escpr{\xi,\nu}\geq 0$ along $\ptl\Sg$, then $\Sg\subeq M_s$ for some $s\in\rr$ with $v'(s)=\la$. 
\end{proposition}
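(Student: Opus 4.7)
The plan is to apply Proposition~\ref{prop:mp} to the height function $\pi$ restricted to $\Sg$, following the blueprint of Theorem~\ref{th:half-space1} but exploiting that compactness of $\Sg$ automatically yields $\psi$-parabolicity, so no global confinement hypothesis is needed. By Lemma~\ref{lem:height}(i),
\[
\Delta_{\Sg,\psi}\pi = H_\psi\,\theta + v'\circ\pi.
\]
The first thing to check is that $\pi$ is $\psi$-subharmonic on $\Sg\setminus\ptl\Sg$. Since $v'\geq\la$, it suffices to show $H_\psi\,\theta\geq-\la$ on $\Sg$. If $|H_\psi|\leq\la$, this follows at once from $|\theta|\leq 1$. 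In the multigraph case, $\theta\leq 0$ and $H_\psi\leq\la$ give $H_\psi\,\theta\geq\la\,\theta\geq-\la$ after reversing the inequality upon multiplication by $\theta$.

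For the first conclusion, set $s^*:=\max_{\Sg}\pi$ and suppose toward contradiction that $s^*>a$. Since $\ptl\Sg\sub M\times(-\infty,a]$, the value $s^*$ must be attained at some point of $\Sg\setminus\ptl\Sg$. Proposition~\ref{prop:mp}(i) then forces $\pi\equiv s^*$ on $\Sg$, contradicting the boundary hypothesis. Hence $s^*\leq a$ and $\Sg\sub M\times(-\infty,a]$.

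For the second conclusion, I distinguish two subcases. If $\Sg$ meets $M_a$ at a point $x_0\in\Sg\setminus\ptl\Sg$, then $\pi$ attains its maximum $a$ in the interior, and Proposition~\ref{prop:mp}(i) again gives $\pi\equiv a$ on $\Sg$, so $\Sg\subeq M_a$. If instead $\escpr{\xi,\nu}\geq 0$ along $\ptl\Sg$, Lemma~\ref{lem:height}(ii) yields $\ptl\pi/\ptl\nu\geq 0$ there; combined with the $\psi$-subharmonicity above and the fact that any compact weighted manifold is $\psi$-parabolic, the Liouville property forces $\pi$ to be constant, so $\Sg\subeq M_s$ for some $s\in\rr$. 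In either subcase, substituting $\Delta_{\Sg,\psi}\pi=0$ into Lemma~\ref{lem:height}(i) gives $v'(s)=-H_\psi\,\theta\leq\la$, and together with $v'(s)\geq\la$ this yields $v'(s)=\la$.

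No serious obstacle is expected: the argument is a direct adaptation of the proof of Theorem~\ref{th:half-space1}, with the role of global parabolicity replaced by compactness of $\Sg$ and the confinement hypothesis replaced by the boundary condition $\ptl\Sg\sub M\times(-\infty,a]$ used only \emph{a posteriori} via the strong maximum principle. The only delicate bookkeeping point is the uniform verification of $H_\psi\,\theta\geq-\la$ under the two alternative curvature hypotheses, which is identical to that already encountered in Theorem~\ref{th:half-space1}.
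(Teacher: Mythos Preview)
Your proof is correct and follows essentially the same approach as the paper: establish $\psi$-subharmonicity of $\pi$ via the two-case estimate $H_\psi\,\theta\geq -\la$, apply the strong maximum principle (Proposition~\ref{prop:mp}(i)) for the enclosure claim, and then invoke either the interior maximum or compact $\psi$-parabolicity for the rigidity conclusion. Your explicit verification that $v'(s)=\la$ via the sandwich $-H_\psi\,\theta\leq\la\leq v'(s)$ is a nice touch that the paper leaves implicit.
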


\begin{proof}
As in the proof of Theorem~\ref{th:half-space1}, our hypotheses entail that $\Delta_{\Sg,\psi}\pi\geq 0$ on $\Sg$. Thus, the maximum principle in Proposition~\ref{prop:mp} (i) implies that $\pi_{|\Sg}$ achieves it maximum along $\ptl\Sg$. Since $\pi\leq a$ along $\ptl\Sg$ it follows that $\pi\leq a$ on $\Sg$, as desired. If $(\Sg\setminus\ptl\Sg)\cap M_a\neq\emptyset$ then $\pi_{|\Sg}$ achieves its maximum at some interior point of $\Sg$, so that $\pi_{|\Sg}=a$ on $\Sg$. Finally, if $\escpr{\xi,\nu}\geq 0$ along $\ptl\Sg$, then $\ptl\pi/\ptl\nu\geq 0$ along $\ptl\Sg$, and the $\psi$-parabolicity of $\Sg$ yields that $\pi_{|\Sg}$ is constant.
\end{proof}

\begin{example}
The proposition applies when $v(t):=\la\,t$ with $\la\geq 0$. This is the case of the translating solitons for the mean curvature flow in $\rrn$. Observe that, if $\la=0$ and $\Sg$ is $\psi$-minimal, then $\Delta_{\Sg,\psi}\pi=0$ on $\Sg$; therefore, if $\ptl\Sg$ is contained in a lower or upper horizontal half-space, then $\Sg$ is contained in the same half-space.
\end{example}

Recall that a compact minimal hypersurface $\Sg\sub\rrn$ with boundary $\ptl\Sg$ on a hyperplane $P$ coincides with the compact set enclosed by $\ptl\Sg$ in $P$. This follows because $\Sg$ satisfies the \emph{convex hull property}, i.e., $\Sg$ lies in the Euclidean convex hull of $\ptl\Sg$. In general this property fails for $\psi$-minimal hypersurfaces, as it is illustrated by a closed half-sphere of radius $\sqrt{n}$ about the origin in $\rrn$ with Gaussian weight $e^{-|x|^2/2}$. From the arguments in the proof of Proposition~\ref{prop:main11} we can obtain a convex hull property for $\psi$-minimal hypersurfaces in $M\times\rr^k$. We need some definitions. A \emph{horizontal half-space} in $M\times\rr^k$ is a set of the form $M\times F$, where $F$ is a closed half-space in $\rr^k$. The \emph{horizontal convex hull} of a non-empty compact set $S\sub M\times\rr^k$ is the intersection of all the horizontal half-spaces containing $S$. 

\begin{corollary}
\label{cor:convexhull}
Consider the Riemannian product $M^n\times\rr^k$ with horizontal weight $e^{\psi(p,t)}:=e^{h(p)}$, where $h\in C^\infty(M)$. If $\Sg$ is a two-sided, compact and connected $\psi$-minimal hypersurface, then $\Sg$ is contained in the horizontal convex hull of $\ptl\Sg$. Moreover, if $\ptl\Sg\sub M\times P$ for some hyperplane $P\sub\rr^k$, then $\Sg\sub M\times P$.
\end{corollary}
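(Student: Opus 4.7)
The plan is to extend the single-direction argument behind Proposition~\ref{prop:main11} to the Riemannian product $M\times\rr^k$ by sweeping through all linear height functions on the $\rr^k$ factor. For each unit vector $n\in\sph^{k-1}$, introduce the height function $\pi_n:M\times\rr^k\to\rr$ defined by $\pi_n(p,t):=\escpr{t,n}$. Its ambient gradient $\nabla\pi_n=(0,n)$ is a parallel vector field on $M\times\rr^k$, playing exactly the role that $\xi$ played on $M\times\rr$, so the same computation as in Lemma~\ref{lem:height}(i) yields $\Delta_\Sg\pi_n=nH\,\escpr{(0,n),N}$. Because the weight $\psi(p,t)=h(p)$ depends only on the horizontal factor, we have $\nabla\psi=(\nabla h,0)$ and hence $\escpr{\nabla\psi,(0,n)}=0$; combining these two facts gives
\[
\Delta_{\Sg,\psi}\pi_n=H_\psi\,\escpr{(0,n),N}=0
\]
on $\Sg$, since $\Sg$ is $\psi$-minimal.

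Applying the strong maximum principle in Proposition~\ref{prop:mp}(i) to both $\pi_n$ and $-\pi_n$ on the compact connected hypersurface $\Sg$, I conclude that $\pi_n|_\Sg$ attains both its maximum and its minimum along $\ptl\Sg$. Consequently, every horizontal half-space of the form $M\times F$, with $F=\{t\in\rr^k:\escpr{t,n}\leq a\}$, that contains $\ptl\Sg$ must already contain all of $\Sg$. Intersecting over all such half-spaces as $n$ varies in $\sph^{k-1}$ and $a$ varies in $\rr$ yields the first assertion: $\Sg$ lies in the horizontal convex hull of $\ptl\Sg$.

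For the second statement, write the hyperplane $P\sub\rr^k$ as $P=\{t\in\rr^k:\escpr{t,n}=c\}$ for a suitable unit vector $n$ and constant $c$. The hypothesis $\ptl\Sg\sub M\times P$ then gives $\pi_n\equiv c$ on $\ptl\Sg$, so applying the preceding paragraph to each of the two half-spaces $\escpr{t,n}\leq c$ and $\escpr{t,n}\geq c$ (both of which contain $\ptl\Sg$) forces $\pi_n\equiv c$ on $\Sg$, that is, $\Sg\sub M\times P$. No substantive obstacle is expected here: the whole argument is a direct transplant of the proof of Proposition~\ref{prop:main11}, the only new ingredient being the purely elementary observation that horizontal convexity in $M\times\rr^k$ is detected by testing the projections onto every one-dimensional direction of $\rr^k$.
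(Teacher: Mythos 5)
Your proposal is correct and coincides with the paper's own argument: both define the linear height function $\pi_n(p,t)=\escpr{t,n}$, verify $\Delta_{\Sg,\psi}\pi_n=H_\psi\escpr{(0,n),N}=0$ because the weight is independent of $t$ and $\Sg$ is $\psi$-minimal, and then apply the maximum principle of Proposition~\ref{prop:mp}(i) to $\pi_n$ (and to $-\pi_n$ for the hyperplane case) before intersecting over all horizontal half-spaces containing $\ptl\Sg$. No further comment is needed.
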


\begin{proof}
Suppose that $\ptl\Sg\sub M\times F$ for some closed half-space $F\sub\rr^k$. Let us write $F=\{t\in\rr^k\,;\,\escpr{t,n}\leq\alpha\}$, where $n$ is a unit vector in $\rr^k$ and $\alpha\in\rr$. Define the height function $\pi_F:M\times\rr^k\to\rr$ by $\pi_F(p,t):=\escpr{t,n}$. By following the computations in the proof of Lemma~\ref{lem:height} we get $\Delta_{\Sg,\psi}\,\pi_F=0$ since $\Sg$ is $\psi$-minimal and the weight does not depend on $t$. The inequality $\pi_F\leq\alpha$ on $\ptl\Sg$ and the maximum principle in Proposition~\ref{prop:mp} (i) imply that $\pi_F\leq\alpha$ on $\Sg$. This shows that $\Sg\sub M\times F$, as we wished. Moreover, if $\ptl\Sg\sub M\times\ptl F$ then we can apply the maximum principle to $\pi_F$ and $-\pi_F$ to deduce that $\Sg\sub M\times\ptl F$ as well.
\end{proof}

\section{Analysis of the angle function}
\label{sec:angle}

Let $M^n\times\rr$ be a Riemannian cylinder and $\Sg\sub M\times\rr$ a two-sided hypersurface with Gauss map $N$. Recall that the \emph{angle function} is defined by $\theta:=\escpr{\xi,N}$. If $\theta\leq 0$ then $\Sg$ is a \emph{horizontal multigraph}. We say that $\Sg$ is a \emph{cylinder} when $\theta=0$. This is equivalent to that $\xi$ is tangent to $\Sg$ and so, $\Sg$ is foliated by vertical segments. In this section we employ the function $\theta$ to establish Bernstein-type results for hypersurfaces having free boundary in $\ptl M\times\rr$ and constant $\psi$-mean curvature with respect to certain $C^2$ weights. Previous analogous results for hypersurfaces with empty boundary are mentioned in the Introduction.

We first state a lemma with some identities involving $\theta$, see \cite[Lem.~3.1]{rosales-cylinders} for a proof in a more general setting, and \cite[Lem.~3]{cavalcante-santos}, \cite[Re.~3.2]{rosales-cylinders} for related computations and references.

\begin{lemma}
\label{lem:angle}
Consider a Riemannian cylinder $M^n\times\rr$ with a weight
$e^\psi$. Let $\Sg$ be a two-sided hypersurface in $M\times\rr$ with Gauss map $N$ and inner conormal $\nu$ along $\ptl\Sg$. If $\Sg$ has constant $\psi$-mean curvature and free boundary in $\ptl M\times\rr$, then:
\begin{itemize}
\item[(i)] $\Delta_{\Sg,\psi}\,\theta+\left(\emph{Ric}_\psi(N,N)+|\sg|^2\right)\theta=\emph{Ric}_\psi(\xi,N)$ on $\Sg$,
\item[(ii)] $\ptl\theta/\ptl\nu=-\emph{II}(N,N)\,\theta$ along $\ptl\Sg$,
\end{itemize}
where $\emph{Ric}_\psi$ is the Bakry-\'Emery-Ricci tensor in \eqref{eq:fricci}, $\sg$ is the the second fundamental form of $\Sg$, and $\emph{II}$ is the second fundamental form of $\ptl M\times\rr$ with respect to the inner unit normal.
\end{lemma}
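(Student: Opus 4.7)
My plan is to treat both identities as weighted generalizations of the well-known Jacobi-type formulas for the angle function of a CMC hypersurface in a Riemannian product, and to follow the line of computation from \cite{rosales-cylinders,cavalcante-santos}. The unifying observation is that $\xi$ is parallel in $M\times\rr$, so the formulas will come from purely tensorial manipulations; the weight will enter only through the identity $nH=H_\psi+\escpr{\nabla\psi,N}$ and the drift term in the definition of $\Delta_{\Sg,\psi}$.

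For (i), I would first decompose $\xi=\xi^T+\theta N$ along $\Sg$ and apply $D_X\xi=0$ for $X\in T\Sg$; separating tangential and normal components yields
\[
\nabla_\Sg\theta=-A(\xi^T),\qquad \nabla_X^\Sg\xi^T=\theta\,A(X),
\]
where $A$ is the shape operator of $\Sg$. In a geodesic orthonormal frame $\{e_i\}$ at a point of $\Sg$, a direct computation of $\Delta_\Sg\theta$ splits into two contributions: a term $-\theta|\sg|^2$ coming from differentiating the Gauss map, and a term $-\sum_{i,j}e_i(h_{ij})\,\xi_j$ with $h_{ij}=\sg(e_i,e_j)$ and $\xi_j=\escpr{\xi,e_j}$. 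The Codazzi equation converts $\sum_i e_i(h_{ij})$ into $e_j(nH)-\ric(e_j,N)$. Since $H_\psi$ is constant but $H$ generally is not, I would use $nH=H_\psi+\escpr{\nabla\psi,N}$ to rewrite $\xi^T(nH)$ as $\nabla^2\psi(\xi^T,N)-\escpr{\nabla\psi,A(\xi^T)}$. Adding the drift contribution $\escpr{\nabla_\Sg\psi,\nabla_\Sg\theta}=-\escpr{\nabla\psi,A(\xi^T)}$ then cancels one piece, and the definition $\ric_\psi=\ric-\nabla^2\psi$ collects the remaining curvature terms into
\[
\Delta_{\Sg,\psi}\theta=\ric_\psi(\xi^T,N)-\theta|\sg|^2.
\]
A final use of $\xi^T=\xi-\theta N$ inside $\ric_\psi(\xi^T,N)$ produces (i).

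For (ii), the same identity $\nabla_\Sg\theta=-A(\xi^T)$ already gives $\ptl\theta/\ptl\nu=-\sg(\xi^T,\nu)$. The free boundary condition forces $N$ to be tangent to $\ptl M\times\rr$ and $\nu$ to coincide with the inner unit normal $n$ of $\ptl M\times\rr$ along $\ptl\Sg$. Since $\xi$ is also tangent to $\ptl M\times\rr$, both $\xi$ and $N$ live in $T(\ptl M\times\rr)$ along $\ptl\Sg$, and hence so does $\xi^T=\xi-\theta N$; in particular $\xi^T$ is tangent to $\ptl\Sg$. Differentiating $\escpr{N,n}=0$ along $\ptl\Sg$ in the direction $\xi^T$, combined with the definition $\emph{II}(X,Y)=-\escpr{Y,D_X n}$, yields $\sg(\xi^T,\nu)=-\emph{II}(\xi^T,N)$. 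Finally, since $\xi$ is parallel, $\emph{II}(\xi,Y)=\escpr{D_Y\xi,n}=0$ for every $Y\in T(\ptl M\times\rr)$, so expanding $\emph{II}(\xi^T,N)=\emph{II}(\xi,N)-\theta\,\emph{II}(N,N)=-\theta\,\emph{II}(N,N)$ gives (ii).

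The hard part is less geometric than bookkeeping: one must keep track of sign conventions for the Codazzi equation, for the second fundamental form $\sg$ of $\Sg$, and for $\emph{II}$ relative to the inner normal, and one must use the constant weighted mean curvature hypothesis carefully in place of constant $H$ at the Codazzi step so that the $\nabla\psi$ terms regroup into $\ric_\psi$ rather than into $\ric$. Once those choices are fixed, both identities follow by direct tensorial manipulation.
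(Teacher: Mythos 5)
Your computation is correct: the identities $\nabla_\Sg\theta=-A(\xi^T)$, the contracted Codazzi equation, the substitution $nH=H_\psi+\escpr{\nabla\psi,N}$ so that the $\escpr{\nabla\psi,A(\xi^T)}$ terms cancel against the drift term and the remaining curvature terms assemble into $\ric_\psi(\xi^T,N)$, and the boundary argument via $\text{II}(\xi,\cdot)=0$ for the parallel field $\xi$, are exactly the standard route. The paper itself does not prove this lemma but delegates it to \cite[Lem.~3.1]{rosales-cylinders}; your sketch reproduces that reference's argument (specialized from Killing to parallel fields), so it is essentially the same approach.
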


Next, we seek conditions ensuring that the horizontal slices are the unique entire horizontal graphs in $M\times\rr$ of constant $\psi$-mean curvature and free boundary in $\ptl M\times\rr$. From Lemma~\ref{lem:fproduct} we can restrict ourselves to product weights. The following statement is independent of Corollary~\ref{cor:half-space2} and shows rigidity properties for horizontal multigraphs by assuming curvature bounds on the weight and local convexity of $\ptl M$.

\begin{theorem}
\label{th:main2}
Let $M^n$ be a Riemannian manifold with locally convex boundary. In the Riemannian cylinder $M\times\rr$ we consider a product weight $
e^{\psi(p,t)}:=e^{h(p)}\,e^{v(t)}$, where $h\in C^\infty(M)$ and $v\in C^\infty(\rr)$. Suppose that $\emph{Ric}_v\leq c\leq \emph{Ric}_h$ for some constant $c\in\rr$. If $\Sg$ is a $\psi$-parabolic horizontal multigraph in $M\times\rr$ with constant $\psi$-mean curvature and free boundary in $\ptl M\times\rr$, then the angle function is constant, and we have:
\begin{itemize}
\item[(i)] $\Sg$ is a cylinder, or
\item[(ii)] $\Sg$ is contained in some horizontal slice $M_s$, or
\item[(iii)] $\Sg$ is a totally geodesic hypersurface satisfying $\emph{Ric}_\psi(N,N)=\emph{Ric}_\psi(\xi,\xi)=c$ on $\Sg$ and $\emph{II}(N,N)=0$ along $\ptl\Sg$.
\end{itemize}  
\end{theorem}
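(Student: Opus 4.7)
The plan is to apply the $\psi$-parabolicity of $\Sg$ to the angle function $\theta$: I will verify that $\theta$ is bounded, $\psi$-subharmonic on $\Sg\setminus\ptl\Sg$, and satisfies $\ptl\theta/\ptl\nu\geq 0$ along $\ptl\Sg$. Parabolicity will then force $\theta$ to be constant, and the trichotomy in the statement will fall out of the corresponding equality discussion.

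For the interior equation I first decompose $N=N^M+\theta\,\xi$ at each point, with $N^M$ tangent to $M$, so $|N^M|^2+\theta^2=1$. Because $e^\psi$ is a product weight, formula \eqref{eq:riccidesc} immediately yields
\begin{equation*}
\ric_\psi(\xi,N)=-v''(t)\,\theta, \qquad \ric_\psi(N,N)=\ric_h(N^M,N^M)-v''(t)\,\theta^2.
\end{equation*}
Plugging these into Lemma~\ref{lem:angle}(i) and collecting the $v''$-terms produces
\begin{equation*}
\Delta_{\Sg,\psi}\,\theta=-\theta\,\bigl[v''(t)\,|N^M|^2+\ric_h(N^M,N^M)+|\sg|^2\bigr].
\end{equation*}
The hypothesis $\ric_v\leq c$ amounts to $v''(t)\geq -c$ (since on $\rr$ one has $\ric_v(\ptl_t,\ptl_t)=-v''$), while $\ric_h\geq c$ gives $\ric_h(N^M,N^M)\geq c\,|N^M|^2$; adding these two estimates makes the bracket non-negative. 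Combined with $-\theta\geq 0$ (horizontal multigraph), this shows $\Delta_{\Sg,\psi}\,\theta\geq 0$.

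For the boundary, Lemma~\ref{lem:angle}(ii) provides $\ptl\theta/\ptl\nu=-\emph{II}(N,N)\,\theta$. Local convexity of $\ptl M$ gives $\emph{II}\geq 0$ for the inner normal of $\ptl M\times\rr$, and combined with $-\theta\geq 0$ this yields $\ptl\theta/\ptl\nu\geq 0$ along $\ptl\Sg$. Since $\theta\in[-1,0]$ is bounded from above and $\Sg$ is $\psi$-parabolic, $\theta$ must be constant on $\Sg$.

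It remains the case analysis. If $\theta\equiv 0$, then $\xi$ is tangent to $\Sg$ and we are in case (i). If $\theta\equiv -1$, then $N=-\xi$, so $\nabla_\Sg\pi=\xi-\theta\,N=0$ and $\Sg\subeq M_s$ for some $s\in\rr$, which is case (ii). Otherwise $\theta$ is a nonzero constant in $(-1,0)$, so $\Delta_{\Sg,\psi}\,\theta=0$; the bracket is a sum of three non-negative terms whose product with the nonzero constant $\theta$ vanishes, forcing each term to be identically zero. This gives $|\sg|^2=0$ (totally geodesic), together with $v''(t)=-c$ and $\ric_h(N^M,N^M)=c\,|N^M|^2$ on $\Sg$, whence $\ric_\psi(\xi,\xi)=-v''(t)=c$ and $\ric_\psi(N,N)=c\,|N^M|^2+c\,\theta^2=c$. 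The boundary identity then reduces to $\emph{II}(N,N)=0$ along $\ptl\Sg$, completing case (iii). No serious obstacle is anticipated beyond the sign-bookkeeping when reading the hypothesis $\ric_v\leq c$ on $\rr$; once the product decomposition is set up, the rest is purely algebraic.
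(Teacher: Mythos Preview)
Your proof is correct and follows essentially the same route as the paper's: both apply Lemma~\ref{lem:angle} together with the product-weight decomposition of $\ric_\psi$ to show that $\theta$ is bounded, $\psi$-subharmonic, and satisfies the Neumann inequality, then invoke $\psi$-parabolicity and carry out the identical case split on the constant value of $\theta$. One small wording slip in the final case: the three summands in your bracket are \emph{not} individually non-negative (e.g.\ $v''|N^M|^2$ may be negative); what you actually have is $v''|N^M|^2+\ric_h(N^M,N^M)\geq 0$ and $|\sg|^2\geq 0$, and the vanishing of their sum forces both to vanish, which in turn (since $|N^M|^2>0$) pins down $v''=-c$ and $\ric_h(N^M,N^M)=c|N^M|^2$ exactly as you conclude.
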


\begin{proof}
Let $N$ be any Gauss map on $\Sg$ and $\theta:=\escpr{\xi,N}$ the associated angle function. Denote by $N_h$ the horizontal component of the Gauss map, so that $N=(N_h,\theta)$. Our curvature bounds and equality~\eqref{eq:riccidesc} give us the estimates
\begin{equation}
\label{eq:adquire}
\begin{split}
c\,|N_h|^2&\leq\ric_h(N_h,N_h)=\ric_\psi\big((N_h,0),(N_h,0)\big),
\\
\ric_\psi(\xi,\xi)&=\ric_v(1,1)\leq c,
\end{split}
\end{equation}
where, for any $x=(p,t)\in\Sg$, the tensor $\ric_\psi$ is evaluated at $x$, whereas $\ric_h$ and $\ric_v$ are evaluated at $p\in M$ and $t\in\rr$, respectively. Hence, by using that $\theta^2+|N_h|^2=1$ on $\Sg$ together with the identity $\ric_\psi(w,\xi)=0$ for any horizontal vector $w$, we get
\begin{equation}
\label{eq:piki0}
\begin{split}
\ric_\psi(N,N)&=\ric_\psi\big((N_h,0),(N_h,0)\big)+\ric_\psi(\xi,\xi)\,\theta^2
\\
&=\ric_h(N_h,N_h)-\ric_v(1,1)\,|N_h|^2+\ric_v(1,1)
\\
&\geq\big(c-\ric_v(1,1)\big)\,|N_h|^2+\ric_v(1,1)
\\
&\geq\ric_v(1,1)=\ric_\psi(\xi,\xi).
\end{split}
\end{equation}

Now we choose a Gauss map $N$ on $\Sg$ for which $\theta\leq 0$. Clearly $\ric_\psi(\xi,N)=\ric_\psi(\xi,\xi)\,\theta$ on $\Sg$. From Lemma~\ref{lem:angle} (i) and inequality \eqref{eq:piki0} we deduce
\begin{equation}
\label{eq:piki1}
\ric_\psi(\xi,\xi)\,\theta=\Delta_{\Sg,\psi}\,\theta+\big(\ric_\psi(N,N)+|\sg|^2\big)\,\theta\leq\Delta_{\Sg,\psi}\,\theta+\ric_\psi(\xi,\xi)\,\theta+|\sg|^2\,\theta,
\end{equation}
and so
\begin{equation}
\label{eq:piki2}
\Delta_{\Sg,\psi}\,\theta\geq\Delta_{\Sg,\psi}\,\theta+|\sg|^2\,\theta\geq 0 \quad\text{on } \Sg.
\end{equation}
By Lemma~\ref{lem:angle} (ii) we know that
\begin{equation}
\label{eq:piki3}
\frac{\ptl\theta}{\ptl\nu}=-\text{II}(N,N)\,\theta\geq 0 \quad\text{on }\ptl\Sg,
\end{equation}
since $\ptl M$ is locally convex. Hence, the $\psi$-parabolicity of $\Sg$ implies that the angle function is a constant $\theta\in [-1,0]$. If $\theta=0$ then $\Sg$ is a cylinder. If $\theta=-1$ then $N=-\xi$ on $\Sg$.  Thus, the height function $\pi(p,t):=t$ satisfies $\nabla_\Sg\pi=\xi-\theta\,N=0$; therefore $\pi$ is constant on $\Sg$, and $\Sg\subeq M_s$ for some $s\in\rr$. Finally suppose that $\theta\in (-1,0)$, so that the horizontal projection $N_h$ does not vanish on $\Sg$. From equation \eqref{eq:piki2} we obtain $|\sg|^2=0$, i.e., $\Sg$ is totally geodesic. By having in mind \eqref{eq:piki1}, \eqref{eq:piki0} and \eqref{eq:adquire} it follows that $\ric_\psi(N,N)=\ric_\psi(\xi,\xi)=c$ on $\Sg$. Finally, from equality \eqref{eq:piki3} we conclude that $\text{II}(N,N)=0$ along $\ptl\Sg$. This completes the proof.
\end{proof}

\begin{example}
In general, under the conditions in Theorem~\ref{th:main2}, horizontal multigraphs different from horizontal slices may appear. Let $M:=\rr^{n-1}\times [0,+\infty)$. Consider the Euclidean half-space $M\times\rr$ with Gaussian weight $e^{-|x|^2/2}$. Let $\Sg$ be the intersection with $M\times\rr$ of any hyperplane in $\rr^{n+1}$ orthogonal to $\ptl M\times\rr$. From \eqref{eq:fmc} it is easy to check that $\Sg$ has constant $\psi$-mean curvature. Moreover, $\Sg$ is $\psi$-parabolic since it has finite weighted area. Clearly $\ric_\psi=1$ in $M\times\rr$ and $\text{II}=0$ on $\ptl M\times\rr$. Note also that the angle function $\theta$ can be any number in $[-1,0]$. In this setting we can show a non-totally geodesic cylinder $\Sg$ in the conditions of the statement; indeed, it suffices to take $\Sg$ as the intersection with $M\times\rr$ of the spherical cylinder $\sph^{n-1}\times\rr$.
\end{example}

In the light of the previous example it is natural to ask if Theorem \ref{th:main2} may be improved to get only horizontal slices in the thesis. In the special case of entire horizontal graphs, additional hypotheses on the base manifold $M$ or the weight $\psi$ lead to the following consequence.

\begin{corollary}
\label{cor:2}
Let $M^n$ be a Riemannian manifold with locally convex boundary. In the Riemannian cylinder $M\times\rr$ we consider a product weight $e^{\psi(p,t)}:=e^{h(p)}\,e^{v(t)}$, where $h\in C^\infty(M)$ and $v\in C^\infty(\rr)$. Suppose that $\emph{Ric}_v\leq c\leq \emph{Ric}_h$ for some constant $c\in\rr$. Consider a $\psi$-parabolic entire horizontal graph $\Sg$ in $M\times\rr$ with constant $\psi$-mean curvature and free boundary in $\ptl M\times\rr$. If one of the following hypotheses holds:
\begin{itemize}
\item[(i)] $\emph{Ric}_h(w,w)>c\,|w|^2$ for any non-zero tangent vector $w$ of $M$,
\item[(ii)] $\ptl M$ is locally strictly convex,
\item[(iii)] $h$ is constant and $c\neq 0$,
\end{itemize}
then $\Sg=M_s$ for some $s\in\rr$.
\end{corollary}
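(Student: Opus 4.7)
The plan is to invoke Theorem~\ref{th:main2} and argue that each additional hypothesis forces its trichotomy to collapse to $\Sg=M_s$. Since $\Sg=\text{Gr}(\var)$ is an entire horizontal graph with downward Gauss map, the angle function $\theta=-1/\sqrt{1+|\nabla_M\var|^2}$ is strictly negative on $\Sg$, so $\Sg$ is not a cylinder and case~(i) of Theorem~\ref{th:main2} is excluded. In case~(ii), the inclusion $\Sg\subeq M_s$ together with the entire graph property forces $\var\equiv s$, giving $\Sg=M_s$ directly. It thus suffices to rule out case~(iii), in which $\theta$ is a constant in $(-1,0)$, $\Sg$ is totally geodesic, $\ric_\psi(N,N)=\ric_\psi(\xi,\xi)=c$ on $\Sg$, and $\emph{II}(N,N)=0$ along $\ptl\Sg$.

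Assuming case~(iii) holds, I would first extract two identities. From $\ric_\psi(\xi,\xi)=c$ and \eqref{eq:riccidesc}, $\ric_v(1,1)\equiv c$ on $\pi(\Sg)$, so $v''(t)=-c$ for every $t\in\pi(\Sg)$. Writing $N=(N_h,\theta)$, equation~\eqref{eq:riccidesc} also gives $\ric_\psi(N,N)=\ric_h(N_h,N_h)+\ric_v(1,1)\,\theta^2$; combining with $\ric_\psi(N,N)=c$ and $|N_h|^2=1-\theta^2$ yields the pointwise equality $\ric_h(N_h,N_h)=c\,|N_h|^2$ on $\Sg$. Crucially $|N_h|^2>0$, since $\theta\in(-1,0)$.

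Each hypothesis of the corollary then produces a contradiction with one of these. Under~(i), the strict bound $\ric_h(N_h,N_h)>c\,|N_h|^2$ directly contradicts the last equality. Under~(ii), the free boundary condition makes $N$ tangent to $\ptl M\times\rr$ along $\ptl\Sg$ and hence $N_h$ tangent to $\ptl M$; the product structure makes the vertical direction a null direction for $\emph{II}$, so $\emph{II}(N,N)$ coincides with the second fundamental form of $\ptl M$ evaluated on $N_h$, which is strictly positive by local strict convexity and $N_h\neq 0$, clashing with $\emph{II}(N,N)=0$. Under~(iii), $h$ constant makes $\nabla\psi$ vertical, and combined with $\Sg$ totally geodesic, equation~\eqref{eq:fmc} gives $H_\psi=-v'(\pi)\,\theta$; since $H_\psi$ and $\theta\neq 0$ are both constants, $v'$ is constant on $\pi(\Sg)$, so $v''\equiv 0$ there, and together with $v''\equiv -c$ on $\pi(\Sg)$ this forces $c=0$, contradicting $c\neq 0$.

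The main subtlety I anticipate lies in case~(iii): the passage from ``$v'$ constant on $\pi(\Sg)$'' to ``$v''\equiv 0$ on $\pi(\Sg)$'' requires $\pi(\Sg)$ to have non-empty interior. I would justify this by noting that the constant value $\theta\in(-1,0)$ pins $|\nabla_M\var|$ to a strictly positive constant, so $\var$ is locally non-constant everywhere on the connected base $M$, and therefore $\pi(\Sg)=\var(M)$ is a non-degenerate interval.
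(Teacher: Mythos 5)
Your proof is correct and follows essentially the same route as the paper: apply Theorem~\ref{th:main2} to get that $\theta$ is constant, discard the cylinder and slice alternatives using the entire-graph hypothesis, and then exploit the identities $\ric_h(N_h,N_h)=c\,|N_h|^2$, $\text{II}(N,N)=\text{II}_{\ptl M}(N_h,N_h)$ and $H_\psi=-(v'\circ\pi)\,\theta$ to dispose of the totally geodesic case under (i), (ii) and (iii) respectively. The only difference is cosmetic and lies in case (iii), where the paper differentiates $v'\circ\pi$ intrinsically on $\Sg$ to obtain $(v''\circ\pi)\,\nabla_\Sg\pi=0$ and concludes at once that $\pi$ is constant (hence $\Sg=M_s$), thereby sidestepping the question of whether $\pi(\Sg)$ has non-empty interior, which you correctly identify and resolve.
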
 

\begin{proof}
Denote by $N$ the Gauss map on $\Sg$ for which $\theta<0$. From Theorem~\ref{th:main2} we infer that $\theta$ is constant. Clearly $\Sg$ cannot be a cylinder. In case $\Sg\subeq M_s$ for some $s\in\rr$ then $\Sg=M_s$ since $\Sg$ is an entire graph. Suppose that $\Sg$ is a totally geodesic hypersurface in $M\times\rr$ with $\ric_\psi(N,N)=\ric_\psi(\xi,\xi)=c$ on $\Sg$ and $\text{II}(N,N)=0$ along $\ptl\Sg$. 

Assume that (i) holds, and denote by $N_h$ the horizontal component of $N$. If $N_h\neq 0$ at some point of $\Sg$, we would deduce from \eqref{eq:adquire} and \eqref{eq:piki0} that $\ric_\psi(N,N)>\ric_\psi(\xi,\xi)$ at that point, a contradiction. So, we have $N=\theta\,\xi$, which implies $N=-\xi$ on $\Sg$ because $N$ and $\xi$ are unit vectors. By reasoning as in the end of the proof of Theorem~\ref{th:main2} we conclude that $\Sg=M_s$ for some $s\in\rr$. 

If hypothesis (ii) holds, then equality $\text{II}(N,N)=0$ leads to $N=\pm\xi$ along $\ptl\Sg$. Since $\theta$ is a negative constant, then $N=-\xi$ on $\Sg$, and we conclude again that $\Sg=M_s$ for some $s\in\rr$. 

Finally, suppose that $h$ is constant and $c\neq 0$. Thus $e^{\psi(p,t)}=Q\,e^{v(t)}$ for some $Q>0$. From \eqref{eq:fmc} and the fact that $\Sg$ is totally geodesic, the $\psi$-mean curvature of $\Sg$ satisfies
\[
H_\psi=-\escpr{\nabla\psi,N}=-(v'\circ\pi)\,\theta,
\]
where $\pi(p,t):=t$. Hence the function $v'\circ\pi$ is constant on $\Sg$, so that $(v''\circ\pi)\,\nabla_\Sg\pi=0$ on $\Sg$. On the other hand, for any $x=(p,t)\in\Sg$, equation~\eqref{eq:riccidesc} gives us
\[
-v''(t)=(\ric_v)_t(1,1)=(\ric_\psi)_x(\xi,\xi)=c, 
\]
so that $v''\circ\pi=-c$ on $\Sg$. Since $c\neq 0$ the equality $(v''\circ\pi)\,\nabla_\Sg\pi=0$ yields that $\pi$ is constant on $\Sg$. Therefore $\Sg=M_s$ for some $s\in\rr$. This finishes the proof.
\end{proof}

\begin{remark}
If we suppose that $\Sg$ is a horizontal multigraph instead of an entire horizontal graph then the statement is still valid with the conclusion that $\Sg\subseteq M_s$ or $\Sg$ is a cylinder. Moreover, $\Sg$ cannot be a cylinder under hypothesis (ii). On the other hand, by taking $M\times\rr$ as an unweighted half-space of $\rrn$, it is easy to see that the assumption $c\neq 0$ in hypothesis (iii) is necessary to get only horizontal slices in the thesis.
\end{remark}

For a compact base manifold $M$ we can deduce a Bernstein-type result where no extra assumptions are needed to obtain only horizontal slices in the conclusion.

\begin{corollary}
\label{cor:bernstein2}
Let $M^n$ be a compact Riemannian manifold with locally convex boundary. In $M\times\rr$ we consider a product weight
$e^{\psi(p,t)}:=e^{h(p)}\,e^{v(t)}$, where $h\in C^\infty(M)$ and $v\in C^\infty(\rr)$. Suppose that $\emph{Ric}_v\leq c\leq \emph{Ric}_h$ for some constant $c\in\rr$. If $\Sg$ is an entire horizontal graph with constant $\psi$-mean curvature and free boundary in $\ptl M\times\rr$, then $\Sg=M_s$ for some $s\in\rr$.   
\end{corollary}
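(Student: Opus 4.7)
The plan is to apply Theorem~\ref{th:main2} to obtain that the angle function is constant on $\Sg$, and then combine this with the compactness of $M$ and the free boundary condition to force the defining function $\var$ to be constant. Since $\Sg=\text{Gr}(\var)$ for some $\var\in C^\infty(\overline{M})$ and $\overline{M}$ is compact, the hypersurface $\Sg$ itself is compact; by the discussion right after Proposition~\ref{prop:mp}, it is therefore $\psi$-parabolic. All the remaining hypotheses of Theorem~\ref{th:main2} are in force, so the angle function $\theta$ must be a constant on $\Sg$.

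I would then choose the downward unit normal along the graph, for which
\[
\theta=\frac{-1}{\sqrt{1+|\nabla_M\var|^2}}.
\]
Constancy of $\theta$ is equivalent to $|\nabla_M\var|$ being a non-negative constant $k$ on $M$, which already rules out alternative (i) of Theorem~\ref{th:main2} (a vertical cylinder would force $\theta=0$, impossible along a graph), and alternative (ii) gives $\Sg=M_s$ at once. Independently of which alternative we are in, the free-boundary orthogonality between $\Sg$ and $\ptl M\times\rr$ means that $N$ is tangent to $\ptl M\times\rr$ along $\ptl\Sg$; reading this through the explicit formula for $N$ translates it into the Neumann condition $\ptl\var/\ptl\nu_M=0$ on $\ptl M$, where $\nu_M$ is the inner unit normal to $\ptl M$ in $M$.

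Since $\overline{M}$ is compact, $\var$ attains its maximum at some point $p_0\in\overline{M}$. If $p_0$ is an interior point, then $\nabla_M\var(p_0)=0$; if $p_0\in\ptl M$, all tangential derivatives of $\var$ at $p_0$ vanish, and the normal derivative vanishes by the Neumann condition. Either way, $|\nabla_M\var|(p_0)=0$, so the constant $k$ equals zero, $\var\equiv s$ for some $s\in\rr$, and $\Sg=M_s$. The step I expect to be the most delicate is the translation of the free-boundary orthogonality into the Neumann condition on $\var$; once that is in place, the conclusion is immediate and, crucially, requires none of the extra hypotheses (i)--(iii) appearing in Corollary~\ref{cor:2}, because the compactness of $M$ supplies through the maximum principle the rigidity that was previously obtained from strict positivity of $\ric_h-c$, strict convexity of $\ptl M$, or non-degeneracy of the vertical weight.
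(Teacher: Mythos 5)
Your proof is correct and follows essentially the same route as the paper: both arguments first invoke Theorem~\ref{th:main2} (after observing that the entire graph over the compact base is compact, hence $\psi$-parabolic) to conclude that the angle function is constant, and then evaluate at an extremum of the height to force $\theta=-1$ and hence $\Sg=M_s$. The only cosmetic difference is that you locate the maximum of $\var$ on $\overline{M}$ and use the Neumann condition $\ptl\var/\ptl\nu_M=0$ coming from the free boundary, whereas the paper locates the minimum of $\pi$ on $\Sg$ and uses $\escpr{\xi,\nu}=0$ there; these are the same argument in different coordinates.
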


\begin{proof}
Let $N$ be the Gauss map of $\Sg$ for which $\theta<0$. Note that $\Sg$ is compact since it is homeomorphic to $M$. Thus, $\Sg$ is $\psi$-parabolic and we can apply Theorem~\ref{th:main2} to get that $\theta$ is constant. Choose a point $x_0\in\Sg$ where the height function $\pi(p,t):=t$ achieves its minimum on $\Sg$. If $x_0\in\Sg\setminus\ptl\Sg$ then $N(x_0)=-\xi$, so that $\theta=-1$ on $\Sg$. If $x_0\in\ptl\Sg$, then we can write $\xi=a\,\nu(x_0)+b\,N(x_0)$. Indeed we have $\xi=b\,N(x_0)$ because $\xi\in T_{x_0}(\ptl M\times\rr)$ and $\nu(x_0)\perp T_{x_0}(\ptl M\times\rr)$ (by the free boundary condition). Since $|\xi|=|N(x_0)|=1$ and $\theta<0$, we obtain $N(x_0)=-\xi$ and $\theta=-1$ on $\Sg$. Anyway, we have proved that $\theta=-1$, and so $N=-\xi$ on $\Sg$. By reasoning as in previous results we conclude that $\Sg=M_s$ for some $s\in\rr$. 
\end{proof}

\begin{remark}
The free boundary condition along $\ptl\Sg$ cannot be removed. For instance, in a convex cylinder $M\times\rr$, where $M\sub\rr^n$ is a convex body and the weight is constant, the intersection of any non-vertical hyperplane with $M\times\rr$ provides an entire horizontal minimal graph.
\end{remark}

We finish this section by describing some interesting situations where our results are applied.

\begin{examples}
(i). By Corollary~\ref{cor:solitons}, if $M\times\rr$ is a $c$-gradient Ricci soliton with respect to a weight $e^\psi$ then $\psi(p,t)=h(p)+v(t)$, where $\ric_h=\ric_v=c$. Hence our curvature bounds in Theorem~\ref{th:main2} are satisfied. A special case occurs when $M$ is an Einstein manifold of Ricci curvature $c$ and the weight is the vertical one $e^{-ct^2/2}$. Moreover, if $c\neq 0$, then Corollary~\ref{cor:2} entails uniqueness of the horizontal slices for the associated Bernstein problem.

(ii). Our conclusions are also valid for certain perturbations of a $c$-gradient Ricci soliton. Indeed, starting from a $c$-gradient Ricci soliton $e^{h(p)}\,e^{v(t)}$ on $M\times\rr$, the curvature bounds still hold for the weight $e^{h(p)+\mu(p)}\,e^{v(t)+\rho(t)}$, where $\mu\in C^\infty(M)$ is a concave function and $\rho\in C^\infty(\rr)$ is convex. Observe that, if $\mu$ is strictly concave, then $\ric_{h+\mu}(w,w)>c\,|w|^2$ for any tangent vector $w\neq 0$ of $M$, which allows to deduce the uniqueness statement in Corollary~\ref{cor:2}.

(iii). The results in this section include the situation of a Riemannian cylinder $M\times\rr$ with a horizontal weight $e^h$ such that $\ric_h\geq c$ for some $c\geq 0$. This applies for instance when the base space is a $c$-gradient Ricci soliton with $c\geq 0$, or a log-concave perturbation of such a soliton.

(iv). Our Bernstein properties are satisfied in a Euclidean convex cylinder $M\times\rr\sub\rrn$ with some interesting product weights like $e^{\psi(x)}:=e^{-c|x|^2/2}$ with $c\in\rr$, $e^{\psi(p,t)}:=e^{-c|p|^2/2}$ with $c\geq 0$ and $e^{\psi(p,t)}:=e^{-c|p|^2/2}\,e^t$ with $c\geq 0$. In particular, we deduce Bernstein-type theorems  for self-shrinkers, self-expanders and translating solitons with boundary, see Example~\ref{ex:flow}. Note also that the restrictions in Theorem~\ref{th:main2} (iii) imply that $\Sg$ is always contained in a hyperplane orthogonal to $\ptl M\times\rr$. We finally remark that previous related results in $\rrn$ were obtained for self-shrinkers with empty boundary by Wang~\cite[Thm.~1.1]{wang}, see also Doan~\cite{doan-bernstein2}, and for entire horizontal minimal graphs for the mixed Gaussian-Euclidean weight $e^{-|p|^2/2}$ by Doan and Nam~\cite{doan-bernstein}. More recently, Hurtado, Palmer and Rosales~\cite[Ex.~4.27]{hpr1} have solved the Bernstein problem in $\rrn$ for suitable perturbations of the Gaussian weight.
\end{examples}

\section{Analysis of distance functions}
\label{sec:distance}

Let $M^n$ be a Riemannian manifold, possibly with smooth  boundary. In the Riemannian product $M\times\rr^k$ with $k\geq 2$, we take the vertical parallel vector field $\xi_i:=(0,e_i)$, where $e_i$ is the $i$th coordinate vector field in $\rr^k$ for any $i=1,\ldots, k$. The \emph{vertical projection} is the map $V:M\times\rr^k\to\rr^k$ defined by $V(p,t):=t$. Sometimes we will identify this projection with the vector field $V(p,t):=(0,t)$ on $M\times\rr^k$. The Euclidean components of $V$ are the height functions $\pi_1,\ldots,\pi_k$, where $\pi_i=\escpr{V,\xi_i}$. The (squared) \emph{vertical distance function} is given by $d:=|V|^2=\pi_1^2+\ldots+\pi_k^2$.

In this section we will use the smooth function $d$ and other combinations of the quadratic monomials $\pi_i^2$ to deduce uniqueness results and enclosure properties for hypersurfaces with non-empty boundary in $M^n\times\rr^k$ endowed with certain weights. We will need the following lemma.

\begin{lemma}
\label{lem:distance}
Consider the Riemannian product $M^n\times\rr^k$ with weight $e^\psi$. Let $\Sg$ be a two-sided hypersurface in $M\times\rr^k$ with Gauss map $N$ and inner conormal $\nu$ along $\ptl\Sg$. Denote by $\theta_i:=\escpr{\xi_i,N}$ the angle function with respect to $\xi_i$ and by $H_\psi$ the $\psi$-mean curvature of $\Sg$. Then, we have:
\begin{itemize}
\item[(i)] $\Delta_{\Sg,\psi}\pi_i^2=2\pi_i\,(H_\psi\,\theta_i+\escpr{\nabla\psi,\xi_i})+2\,(1-\theta_i^2)$, for any $i=1,\ldots,k$,
\item[(ii)] $\Delta_{\Sg,\psi}d=2\,\big(H_\psi\,\escpr{V,N}+\escpr{\nabla\psi,V}+k-1+|N_h|^2\big)$,
\item[(iii)] $\ptl d/\ptl\nu=2\,\escpr{V,\nu}$ along $\ptl\Sg$,
\end{itemize} 
where $N_h$ is the horizontal projection of $N$ in $M\times\rr^k$. 
\end{lemma}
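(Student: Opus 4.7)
The plan is to treat the three identities as essentially routine consequences of the computation already carried out in Lemma~\ref{lem:height}, combined with elementary Riemannian identities for the product $M\times\rr^k$. The three assertions decouple, so I would handle them in the order (iii), (i), (ii), since (ii) is most easily obtained by summing the identities in (i).

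For (iii), I would observe that on the ambient product the gradient satisfies $\nabla\pi_i=\xi_i$ for each $i$, so $\nabla d=2\sum_i\pi_i\,\xi_i=2V$ when $V$ is viewed as the vector field $(0,t)$. Since the conormal $\nu$ is tangent to $\Sg$, we have $\ptl d/\ptl\nu=\escpr{\nabla_\Sg d,\nu}=\escpr{\nabla d,\nu}=2\escpr{V,\nu}$.

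For (i), the key is that the argument in Lemma~\ref{lem:height} applies verbatim to any of the parallel unit fields $\xi_i$ in $M\times\rr^k$: namely $\xi_i$ is parallel, and $\pi_i$ is its potential. Thus
\begin{equation}
\Delta_{\Sg,\psi}\pi_i=H_\psi\,\theta_i+\escpr{\nabla\psi,\xi_i},\qquad \nabla_\Sg\pi_i=\xi_i-\theta_i N,
\end{equation}
and so $|\nabla_\Sg\pi_i|^2=1-\theta_i^2$. Combining these with the Leibniz-type identity $\Delta_{\Sg,\psi}(f^2)=2f\,\Delta_{\Sg,\psi}f+2|\nabla_\Sg f|^2$ (immediate from \eqref{eq:deltaf}) yields exactly the formula in (i).

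For (ii), I would simply sum the identity in (i) over $i=1,\ldots,k$, using that $\sum_i\pi_i\,\theta_i=\escpr{V,N}$, that $\sum_i\pi_i\escpr{\nabla\psi,\xi_i}=\escpr{\nabla\psi,V}$, and that if $N_v:=\sum_i\theta_i\,\xi_i$ is the vertical component of $N$ then $\sum_i\theta_i^2=|N_v|^2=1-|N_h|^2$. The last relation gives $\sum_i(1-\theta_i^2)=k-1+|N_h|^2$, and assembling the pieces produces the stated expression. I do not expect any step here to be a real obstacle; the only point one has to be careful about is the bookkeeping between the ambient gradient $\nabla\psi$ (used in the formula for $\Delta_{\Sg,\psi}$ via \eqref{eq:flaplacian}) and the tangential gradient $\nabla_\Sg\psi$, which is exactly the ingredient that made Lemma~\ref{lem:height} hold and that transfers unchanged to the present setting.
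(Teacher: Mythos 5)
Your proposal is correct and follows essentially the same route as the paper: transfer the computation of Lemma~\ref{lem:height} to each parallel field $\xi_i$, apply the Leibniz identity $\Delta_{\Sg,\psi}(\pi_i^2)=2\pi_i\,\Delta_{\Sg,\psi}\pi_i+2|\nabla_\Sg\pi_i|^2$ for (i), sum over $i$ using $\sum_i\theta_i^2=1-|N_h|^2$ for (ii), and use $\nabla d=2V$ together with the tangency of $\nu$ for (iii). No gaps.
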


\begin{proof}
As in the proof of Lemma~\ref{lem:height} we get $\nabla\pi_i=\xi_i$ on $M\times\rr^k$, $\nabla_\Sg\pi_i=\xi_i-\theta_i\,N$ on $\Sg$ and $\Delta_{\Sg,\psi}\pi_i=H_\psi\,\theta_i+\escpr{\nabla\psi,\xi_i}$ on $\Sg$. Hence, the identity in (i) follows from the elementary property $\Delta_{\Sg,\psi}\pi^2_i=2\pi_i\,\Delta_{\Sg,\psi}\pi_i+2\,|\nabla_\Sg\pi_i|^2$ on $\Sg$. From equality (i), the definition of $d$, and the fact that $|N_h|^2+\sum_{i=1}^k\theta_i^2=1$ on $\Sg$ we obtain (ii). Finally, note that
\[
\frac{\ptl d}{\ptl\nu}=\escpr{\nabla d,\nu}=2\,\sum_{i=1}^k\pi_i\,\escpr{\nabla\pi_i,\nu}=2\,\sum_{i=1}^k\pi_i\,\escpr{\xi_i,\nu}=2\,\escpr{V,\nu} \quad\text{in }\ptl\Sg.
\]
This completes the proof. 
\end{proof}

The weighted Laplacian $\Delta_{\Sg,\psi}d$ can be written in terms of the weighted mean curvature of the level sets of the distance function $d$. For any $r>0$, the Riemannian mean curvature of $M\times\sph^{k-1}(r)$ with respect to the unit normal $-V(p,t)/r=(0,-t/r)$ equals $\frac{k-1}{(n+k-1)r}$. Hence, the $\psi$-mean curvature function $H_{\psi,r}$ of $M\times\sph^{k-1}(r)$ satisfies
\begin{equation}
\label{eq:hcyl}
H_{\psi,r}=\frac{1}{r}\,\big(k-1+\escpr{\nabla\psi,V}\big).
\end{equation}
As a consequence $\sqrt{d}\,H_{\psi,\sqrt{d}}=\escpr{\nabla\psi,V}+k-1$ on $\Sg\setminus (M\times\{0\})$. From Lemma~\ref{lem:distance} (ii) we deduce 
\begin{equation}
\label{eq:sesost}
\Delta_{\Sg,\psi}d=2\,\big(H_\psi\,\escpr{V,N}+\sqrt{d}\,H_{\psi,\sqrt{d}}+|N_h|^2\big) \quad \text{ on } \Sg\setminus (M\times\{0\}).
\end{equation}

Now, we use the $\psi$-parabolicity condition of $\Sg$ applied to the function $d$ to prove an uniqueness result. For this we find situations where $\Delta_{\Sg,\psi}d\geq 0$ on $\Sg$ and $\ptl d/\ptl\nu\geq 0$ along $\ptl\Sg$. Observe that $d_{|\Sg}\leq r^2$ if and only if $\Sg\subset M\times B^k(r)$, where $B^k(r)\subset\rr^k$ is the closed round ball of radius $r>0$ centered at $0$. The fact that $d_{|\Sg}$ is a constant $r^2>0$ is equivalent to that $\Sg\subseteq M\times\sph^{k-1}(r)$. Having all this in mind we  deduce a cylindrical version of the half-space theorem.

\begin{theorem}
\label{th:cylinder}
Consider a Riemannian product $M^n\times\rr^k$ with weight $e^\psi$. Suppose that there exist $\la\geq 0$ and $r_0>0$ such that $H_{\psi,r}\geq\la$ for any $r\in (0,r_0]$. Let $\Sg$ be a two-sided, connected and $\psi$-parabolic hypersurface contained in $M\times B^k(r_0)$. If the $\psi$-mean curvature of $\Sg$ satisfies $|H_\psi|\leq\la$ and $\Sg$ is tangent to $M\times\sph^{k-1}(r_0)$ along $\ptl\Sg$, then $\Sg\subseteq M\times\sph^{k-1}(r_0)$ and $H_{\psi,r_0}=\la$.
\end{theorem}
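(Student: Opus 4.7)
The plan is to mirror the scheme of Theorem~\ref{th:half-space1}, now applied to the squared vertical distance $d=|V|^2$ on $\Sg$ in place of the height function $\pi$; the hypothesis $\Sg\subseteq M\times B^k(r_0)$ provides the upper bound $d\leq r_0^2$ needed to invoke $\psi$-parabolicity.

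The first task is to verify $\Delta_{\Sg,\psi} d\geq 0$ on $\Sg$. At points of $\Sg$ where $V\neq 0$, formula \eqref{eq:sesost} expresses this Laplacian as $2\big(H_\psi\escpr{V,N}+\sqrt{d}\,H_{\psi,\sqrt{d}}+|N_h|^2\big)$, and I would estimate each summand separately. The Cauchy--Schwarz bound $|\escpr{V,N}|\leq|V|=\sqrt{d}$ together with $|H_\psi|\leq\la$ yields $H_\psi\escpr{V,N}\geq -\la\sqrt{d}$; the hypothesis $H_{\psi,r}\geq\la$ for every $r\in(0,r_0]$, combined with $\sqrt{d}\leq r_0$, gives $\sqrt{d}\,H_{\psi,\sqrt{d}}\geq\la\sqrt{d}$; and $|N_h|^2\geq 0$ is trivial. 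Adding these three bounds produces $\Delta_{\Sg,\psi} d\geq 2|N_h|^2\geq 0$ on $\Sg\setminus(M\times\{0\})$. At points of $\Sg$ where $V=0$, Lemma~\ref{lem:distance} (ii) gives directly $\Delta_{\Sg,\psi} d=2(k-1+|N_h|^2)\geq 0$ since $k\geq 2$, so $d$ is $\psi$-subharmonic on all of $\Sg$.

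For the boundary analysis, the tangency hypothesis means $T_x\Sg=T_x(M\times\sph^{k-1}(r_0))$ and in particular $x\in M\times\sph^{k-1}(r_0)$ for each $x\in\ptl\Sg$; hence the inner conormal $\nu$ is tangent to $M\times\sph^{k-1}(r_0)$ and therefore orthogonal to the radial direction $V/r_0$. This gives $\escpr{V,\nu}=0$ and, by Lemma~\ref{lem:distance} (iii), $\ptl d/\ptl\nu=0$ along $\ptl\Sg$. Since $d$ is bounded above by $r_0^2$, is $\psi$-subharmonic, satisfies the Neumann condition, and $\Sg$ is $\psi$-parabolic, the Liouville-type property forces $d$ to be constant on $\Sg$; this constant equals $r_0^2$ because $\ptl\Sg\subseteq M\times\sph^{k-1}(r_0)$, yielding the containment $\Sg\subseteq M\times\sph^{k-1}(r_0)$.

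With $d\equiv r_0^2$ one has $\Delta_{\Sg,\psi} d=0$ identically on $\Sg$, so all three of the estimates above must hold with equality throughout $\Sg$. Equality in the middle bound, applied with $\sqrt{d}=r_0>0$, yields $H_{\psi,r_0}=\la$ on $\Sg$, finishing the argument. I anticipate that the only mildly delicate point is handling the locus $\Sg\cap(M\times\{0\})$, where the rewriting in \eqref{eq:sesost} via $H_{\psi,\sqrt{d}}$ is singular; however, this is resolved by falling back to the raw expression in Lemma~\ref{lem:distance} (ii), where positivity is automatic from $k\geq 2$.
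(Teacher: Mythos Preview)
Your proof is correct and follows essentially the same route as the paper: show $\Delta_{\Sg,\psi}d\geq 0$ from \eqref{eq:sesost} via the estimates $|\escpr{V,N}|\leq\sqrt{d}$, $|H_\psi|\leq\la$, and $H_{\psi,\sqrt{d}}\geq\la$, use the tangency condition to get $\ptl d/\ptl\nu=0$, invoke $\psi$-parabolicity, and read off $H_{\psi,r_0}=\la$ from equality. The only cosmetic difference is that the paper extends subharmonicity to the singular set $\Sg\cap(M\times\{0\})$ by a density/continuity argument, whereas you compute directly from Lemma~\ref{lem:distance} (ii); both are fine.
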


\begin{proof}
We start from the expression of $\Delta_{\Sg,\psi}d$ in \eqref{eq:sesost}. Our hypotheses together with the estimates $|\escpr{V,N}|\leq\sqrt{d}$ and $|N_h|^2\geq 0$ on $\Sg$ lead to
\[
\Delta_{\Sg,\psi}d=2\,\big(H_\psi\,\escpr{V,N}+\sqrt{d}\,H_{\psi,\sqrt{d}}+|N_h|^2\big)\geq 2\,\sqrt{d}\,\big(H_{\psi,\sqrt{d}}-|H_\psi|\big)\geq 2\,\sqrt{d}\,\big(H_{\psi,\sqrt{d}}-\la\big)\geq 0.
\]
Since the previous inequality holds on $\Sg\setminus(M\times\{0\})$, which is a dense subset of $\Sg$, then $\Delta_{\Sg,\psi}d\geq 0$. Moreover, the boundary hypothesis yields $\ptl d/\ptl\nu=2\,\escpr{V,\nu}=0$ along $\ptl\Sg$. Thus, the $\psi$-parabolicity of $\Sg$ implies that $d_{|\Sg}$ is constant, and so $\Sg\subseteq M\times\sph^{k-1}(r_0)$. Finally, the fact that $H_{\psi,r_0}=\la$ follows from equality $\Delta_{\Sg,\psi}d=0$.
\end{proof}

\begin{remark}
\label{re:whole}
The statement is also valid when $M=\{0\}$, i.e., in $\rr^k$ with a weight $e^\psi$. In this case $V$ coincides with the position vector field in $\rr^k$, the horizontal projection $N_h$ vanishes and $H_{\psi,r}$ is the $\psi$-mean curvature function of the round sphere $\sph^{k-1}(r)$. Hence, the result provides a counterpart to the half-space theorem where we employ spheres as barriers instead of hyperplanes. Note that the tangency condition along $\ptl\Sg$ may be replaced with the hypothesis that $\Sg$ has free boundary in $\ptl C\setminus\{0\}$, where $C$ is a smooth solid cone in $\rr^k$ with vertex at the origin. This ensures $\escpr{V,\nu}=0$ because $V$ is tangent over $\ptl C$ and $\nu$ provides along $\ptl\Sg$ a unit normal vector to $\ptl C$. 
\end{remark}

Next, we show applications of Theorem~\ref{th:cylinder} to some interesting product weights on $M\times\rr^k$, i.e., those of the form $e^{\psi(p,t)}:=e^{h(p)}\,e^{v(t)}$ for any $(p,t)\in M\times\rr^k$. 

\begin{example}
Suppose that the vertical component is defined by a radial function $v(t):=\delta(|t|)$ where $\delta$ is $C^1$ with $\delta'(0)=0$. According to \eqref{eq:hcyl} we have
\[
H_{\psi,r}=\frac{k-1}{r}+\delta'(r),
\]
so that any cylinder $M\times\sph^{k-1}(r)$ has constant $\psi$-mean curvature. Note that $H_{\psi,r}\to+\infty$ when $r\to 0$ and so, for given $\la\geq 0$, there is $r_0>0$ such that $H_{\psi,r}\geq\la$ for any $r\in(0,r_0]$. This allows to employ the theorem for hypersurfaces inside $M\times B^k(r_0)$. On the other hand, the existence of $r>0$ for which $H_{\psi,r}=\la$ depends on the function $\delta$. For instance, this is guaranteed for any $\la\geq 0$ if $\delta'(r)\to q$ when $r\to+\infty$ with $q<0$ or $q=-\infty$. Let us discuss in detail the case $\delta(r):=\eps\,r^\alpha/\alpha$ when $\eps=\pm 1$ and $\alpha\geq 2$. 

For $\eps=-1$ the function $H_{\psi,r}$ is decreasing with respect to $r$, so that equation $H_{\psi,r}=\la$ has a unique solution $r_\la$ for given $\la\in\rr$. Thus, for any $\la\geq 0$, we can use Theorem~\ref{th:cylinder} for hypersurfaces with $|H_\psi|\leq\la$ inside $M\times B^k(r_\la)$. In particular, if a $\psi$-parabolic minimal hypersurface $\Sg\subset M\times B^k(r_0)$ with $r_0:=\sqrt[\alpha]{k-1}$ is tangent to $M\times\sph^{k-1}(r_0)$ along $\ptl\Sg$, then $\Sg\subset M\times \sph^{k-1}(r_0)$.

For $\eps=1$ the function $H_{\psi,r}$ is, as a function of $r$, strictly positive, convex, and attains its minimum for $R:=\big(\frac{k-1}{\alpha-1}\big)^{1/\alpha}$. So, we can use Theorem~\ref{th:cylinder} in any cylinder $M\times B^k(r)$ for hypersurfaces with $|H_\psi|\leq H_{\psi,R}$. Since $H_{\psi,r}>0$ for any $r>0$ we also infer a non-existence statement for $\psi$-parabolic minimal hypersurfaces with boundary inside solid cylinders $M\times B^k(r)$.

The previous analysis covers the antiGaussian and Gaussian vertical weights ($\alpha=2$ and $\eps=\pm1$). By a result of Petersen and Wylie~\cite[Lem.~2.1]{petersen-wylie} this includes those weights in $M\times\rr^k$ producing a $c$-gradient Ricci soliton with $c=\pm 1$. A special case is that of self-shrinkers and self-expanders in $\rr^{n+k}$. Self-shrinkers with empty boundary inside round balls or cylinders were studied by Vieira and Zhou~\cite[Thm.~1]{vieira-zhou}, Pigola and Rimoldi~\cite[Thm.~1]{pigola-rimoldi}, Cavalcante and Espinar~\cite[Thm.~1.2]{espinar-halfspace}, and Impera, Pigola and Rimoldi~\cite[Thm.~A]{ipr}. We remark that Theorem~\ref{th:cylinder} shows inexistence of compact self-expanders with boundary tangent to a cylinder $\rr^n\times\sph^{k-1}(r)$ or a sphere $\sph^{n+k-1}(r)$. In a similar way, there are no compact self-expanders in $\rr^{n+k}$ with free boundary in $\ptl C\setminus\{0\}$, where $C$ is a smooth solid cone with vertex at $0$.
\end{example}

\begin{example}
\label{ex:bermejo}
If $v(t)$ is constant then $H_{\psi,r}=(k-1)/r$, which is a positive and decreasing function with $H_{\psi,r}\to+\infty$ when $r\to 0$ and $H_{\psi,r}\to 0$ when $r\to+\infty$. Hence, for any $\la>0$, there is a unique $r_\la>0$ satisfying $H_{\psi,r_\la}=\la$. As $H_{\psi,r}\geq\la$ for any $r\in(0,r_\la]$ we can apply Theorem~\ref{th:cylinder} for hypersurfaces in $M\times B^k(r_\la)$ with $|H_\psi|\leq\la$. We also deduce inexistence of $\psi$-parabolic minimal hypersurfaces in $M\times B^k(r)$ with boundary tangent to $M\times\sph^{k-1}(r)$. Some particular situations are the Riemannian products $M\times\rr^k$ with constant weights and the mixed (anti)Gaussian-Euclidean weights in $\rr^{n+k}$. After a change of coordinates, the case of translating solitons in Example~\ref{ex:flow} is also included. In relation to this, P\'erez-Garc\'ia~\cite[Thm.~2.2]{perez-garcia} proved inexistence of non-compact embedded translating solitons without boundary contained in any cylinder. Other non-existence results for translating solitons with empty boundary are described in \cite[Sect.~1]{kim-pyo}.
\end{example}

\begin{example}
Suppose that $e^{v}$ is a \emph{homogeneous weight} of degree $\alpha\in\rr$. Following \cite[Sect.~3]{homostable} this means that $v$ is a $C^1$ function on $\rr^k\setminus\{0\}$ such that $e^{v(st)}=s^\alpha\,e^{v(t)}$ for any $s>0$ and $t\neq 0$. Note that $e^v$ cannot be continuously extended to $\rr^k$ as a smooth positive function. By having in mind the second equality in \cite[Lem.~3.5]{homostable} it follows that $H_{\psi,r}=(k+\alpha-1)/r$, so that $M\times\sph^{k-1}(r)$ has constant weighted mean curvature. Note that $H_{\psi,r}<0$ for any $r>0$ whenever $\alpha<1-k$. If $\alpha>1-k$ then $H_{\psi,r}$ is a positive decreasing function with $H_{\psi,r}\to+\infty$ when $r\to 0$ and $H_{\psi,r}\to 0$ when $r\to+\infty$. So, for hypersurfaces $\Sg$ with $0\notin\Sg$, we infer the same consequences as in Example~\ref{ex:bermejo}. Finally, when $\alpha=1-k$ we get $H_{\psi,r}=0$ for any $r>0$. Hence, Theorem~\ref{th:cylinder} applies for $\psi$-parabolic minimal hypersurfaces $\Sg$ contained in some cylinder $M\times\sph^{k-1}(r)$ and with $0\notin\Sg$. In particular, the only compact $\psi$-minimal hypersurfaces away from the origin that are tangent to $M\times\sph^{k-1}(r)$ along the boundary are compact regions of $M\times\sph^{k-1}(r)$. As indicated in Remark~\ref{re:whole} we can also deduce a classification statement for hypersurfaces inside a round ball $B^k(r)\sub\rr^k$ with a homogeneous weight. We remark that the result is still valid for compact hypersurfaces with free boundary in $\ptl C\setminus\{0\}$, where $C\sub\rr^k$ is a smooth solid cone with vertex at the origin.
\end{example}

Next, we employ the maximum principle to establish an enclosure property for compact hypersurfaces having non-empty boundary inside $M\times B^k(r_0)$. Similar arguments lead to an enclosure property for hypersurfaces inside round balls about the origin in $\rr^k$. In Euclidean space with constant weight we recover well-known facts for compact minimal hypersurfaces. The proposition also applies for self-expanders and translating solitons of the mean curvature flow. 

\begin{proposition}
\label{prop:cylinder}
Let $M^n\times\rr^k$ be a Riemannian product with weight $e^\psi$ such that $H_{\psi,r}\geq\la\geq 0$ for any $r>0$. Take a two-sided, compact and connected hypersurface $\Sg\subset M\times\rr^k$ with $|H_\psi|\leq\la$. If $\ptl\Sg\subset M\times B^k(r_0)$ for some $r_0>0$, then $\Sg\subseteq M\times B^{k}(r_0)$. Moreover, if $\Sg$ intersects $M\times\sph^{k-1}(r_0)$ away from $\ptl\Sg$, or $\escpr{V,\nu}\geq0$ along $\ptl\Sg$, then $\Sg\subseteq M\times\sph^{k-1}(r)$ for some $r>0$ where $H_{\psi,r}=\la$.
\end{proposition}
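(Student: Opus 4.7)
The plan is to follow the exact strategy of Proposition~\ref{prop:main11}, replacing the height function $\pi$ by the squared vertical distance $d=|V|^2$. The key ingredients are formula \eqref{eq:sesost} for $\Delta_{\Sg,\psi}d$, the boundary identity in Lemma~\ref{lem:distance} (iii), and the strong maximum principle together with $\psi$-parabolicity of compact hypersurfaces.

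First I would establish the pointwise inequality $\Delta_{\Sg,\psi}d\geq 0$ on $\Sg$. On $\Sg\setminus(M\times\{0\})$ the identity \eqref{eq:sesost} gives
\begin{equation*}
\Delta_{\Sg,\psi}d=2\,\big(H_\psi\,\escpr{V,N}+\sqrt{d}\,H_{\psi,\sqrt{d}}+|N_h|^2\big).
\end{equation*}
Using the Cauchy--Schwarz bound $|\escpr{V,N}|\leq |V|=\sqrt{d}$, the hypothesis $|H_\psi|\leq\la$, and the hypothesis $H_{\psi,r}\geq\la$ for every $r>0$, we obtain
\begin{equation*}
\Delta_{\Sg,\psi}d\geq 2\sqrt{d}\,(H_{\psi,\sqrt{d}}-|H_\psi|)+2|N_h|^2\geq 2\sqrt{d}\,(\la-\la)\geq 0,
\end{equation*}
and since $d$ is smooth on $M\times\rr^k$ the function $\Delta_{\Sg,\psi}d$ is continuous on $\Sg$, so this extends by density to all of $\Sg$. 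The maximum principle of Proposition~\ref{prop:mp} (i) then forces $d_{|\Sg}$ to attain its maximum along $\ptl\Sg$, and the hypothesis $d\leq r_0^2$ on $\ptl\Sg$ yields $\Sg\subeq M\times B^k(r_0)$.

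For the second assertion, if $\Sg$ meets $M\times\sph^{k-1}(r_0)$ at some interior point then $d_{|\Sg}$ achieves the value $r_0^2$ at a point of $\Sg\setminus\ptl\Sg$; by Proposition~\ref{prop:mp} (i), $d_{|\Sg}\equiv r_0^2$. If instead $\escpr{V,\nu}\geq 0$ along $\ptl\Sg$, then Lemma~\ref{lem:distance} (iii) gives $\ptl d/\ptl\nu\geq 0$; since $\Sg$ is compact it is $\psi$-parabolic, and the Liouville-type property forces $d_{|\Sg}$ to be a constant $r^2$. Because $\Sg$ is a hypersurface and $M\times\{0\}$ has codimension $k\geq 2$ in $M\times\rr^k$, we must have $r>0$, so $\Sg\subeq M\times\sph^{k-1}(r)$. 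On this round cylinder the tangent space of $\Sg$ coincides with that of $M\times\sph^{k-1}(r)$, hence the Gauss map of $\Sg$ satisfies $N=\pm V/r$, giving $N_h=0$ and $\escpr{V,N}=\pm r$. Inserting this into the equality $\Delta_{\Sg,\psi}d=0$ yields $H_\psi=\pm H_{\psi,r}$, so $|H_\psi|=H_{\psi,r}$. Combining with $H_{\psi,r}\geq\la\geq|H_\psi|$ forces $H_{\psi,r}=\la$, completing the proof.

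The main technical points are the careful extension of the Laplacian inequality across the (possibly empty) intersection $\Sg\cap(M\times\{0\})$ and the bookkeeping of the orientation of $N$ in the equality case; neither presents a serious obstacle since $d$ is globally smooth and $H_{\psi,r}$ is orientation-specified through \eqref{eq:hcyl}.
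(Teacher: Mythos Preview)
Your proof is correct and follows essentially the same approach as the paper: show $\Delta_{\Sg,\psi}d\geq 0$ via \eqref{eq:sesost} and the estimates $|\escpr{V,N}|\leq\sqrt{d}$, $|H_\psi|\leq\la\leq H_{\psi,\sqrt{d}}$, then apply the maximum principle for the first claim and either the interior-maximum argument or $\psi$-parabolicity for the second. Your treatment of the equality case is in fact slightly more careful than the paper's (you justify $r>0$ via the codimension of $M\times\{0\}$ and handle the orientation of $N$ explicitly before concluding $H_{\psi,r}=\la$), but the underlying argument is the same.
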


\begin{proof}
We can reason as in the proof of Theorem~\ref{th:cylinder} to obtain $\Delta_{\Sg,\psi}d\geq 0$. By Proposition~\ref{prop:mp} (i) the function $d_{|\Sg}$ achieves its maximum along $\ptl\Sg$. Since $d\leq r^2_0$ along $\ptl\Sg$ then $d\leq r^2_0$ on $\Sg$, as we claimed. On the other hand, if $(\Sg\setminus\ptl\Sg)\cap (M\times\sph^{k-1}(r_0))\neq\emptyset$ then the maximum of $d_{|\Sg}$ is achieved at some interior point, so that $d_{|\Sg}=r^2_0$. Finally, if $\escpr{V,\nu}\geq 0$ along $\ptl\Sg$, then $\ptl d/\ptl\nu\geq 0$ along $\ptl\Sg$ by Lemma~\ref{lem:distance} (iii), and the $\psi$-parabolicity of $\Sg$ entails that $d_{|\Sg}$ is a constant function $r>0$. From the inequalities $\la\leq H_{\psi,r}=H_\psi\leq\la$ on $\Sg$, we conclude that $H_{\psi,r}=\la$.
\end{proof}

To finish this work we show how to extend to the weighted setting other enclosure properties for compact minimal hypersurfaces with boundary in Euclidean space. More precisely, we discuss below a weighted counterpart of the ``hyperboloid theorem'', see Theorem 2 in \cite[Sect.~6.1]{dierkes2}.  

In $M^n\times\rr^{k}$ with weight $e^\psi$ we consider the function $\rho:=\sum_{i=1}^{k-1}\pi_i^2-\pi_{k}^2$. Note that $\rho^{-1}(r^2)=M\times\mathcal{H}_r$, where $\mathcal{H}_r$ is a one-sheeted hyperboloid in $\rr^{k}$ for $r\neq 0$ or a cone if $r=0$. Given a two-sided, compact and connected $\psi$-minimal hypersurface $\Sg\sub M\times\rr^{k}$ with unit normal $N$, the identity in Lemma~\ref{lem:distance} (i) and the fact that $|N_h|^2+\sum_{i=1}^{k}\theta_i^2=1$ lead to
\[
\Delta_{\Sg,\psi}\rho=2\,\bigg(\sum_{i=1}^{k-1}\pi_i\,\escpr{\nabla\psi,\xi_i}-\pi_{k}\,\escpr{\nabla\psi,\xi_{k}}\bigg)+2\,(k-3+|N_h|^2+2\theta_{k}^2).
\] 
Denote the points in $M\times\rr^{k}$ by $(p,t,s)$ with $p\in M$, $t\in\rr^{k-1}$ and $s\in\rr$. By taking a product weight $e^{\psi(p,t,s)}:=e^{h(p)}\,e^{v(t)}$, where $v(t)=\delta(|t|)$ for some $C^1$ function $\delta$ with $\delta'(0)=0$, we infer
\[
\Delta_{\Sg,\psi}\rho=2\,\delta'(|t|)\,|t|+2\,(k-3+|N_h|^2+2\theta_{k}^2).
\]
Hence $\rho_{|\Sg}$ is a $\psi$-subharmonic function when $\delta$ is non-decreasing and $k\geq 3$. By the maximum principle in Proposition~\ref{prop:mp} (i) we deduce that $\rho_{|\Sg}$ attains its maximum along $\ptl\Sg$. Hence, if $\ptl\Sg$ is inside a region $\{\rho\leq r^2\}$ for some $r\geq 0$, then the same holds for $\Sg$. Moreover, in the case $r=0$, the regularity and connectedness of $\Sg$ entail that $\ptl\Sg$ cannot intersect both cones $\{\rho>0\}$ and $\{\rho<0\}$. This generalizes the ``cone theorem'' for minimal surfaces in Euclidean space, see Theorem 3 in \cite[Sect.~6.1]{dierkes2}. Similar conclusions are achieved when $e^v$ is a homogeneous weight of non-negative degree in $\rr^{k-1}$. The analysis is also valid when $M=\{0\}$; in particular, it applies for compact translating solitons.

\begin{remark}
We can employ the same arguments with the function $\mu:=\sum_{i=1}^{k-1}\pi_i^2-\pi_k$ to deduce a ``paraboloid theorem'' for compact $\psi$-minimal hypersurfaces with boundary in $M\times\rr^k$.
\end{remark}

\providecommand{\bysame}{\leavevmode\hbox to3em{\hrulefill}\thinspace}
\providecommand{\MR}{\relax\ifhmode\unskip\space\fi MR }
\providecommand{\MRhref}[2]{%
  \href{http://www.ams.org/mathscinet-getitem?mr=#1}{#2}
}
\providecommand{\href}[2]{#2}

\end{document}